\documentclass{amsart}

\usepackage{amsmath,amssymb,color}

\def\oblong{\square}

\textheight=240truemm \textwidth=158truemm

\hoffset=-17truemm \voffset=-10truemm

\numberwithin{equation}{section}

\newtheorem{theorem}[equation]{Theorem}
\newtheorem{lem}[equation]{Lemma}
\newtheorem{sch}[equation]{Scholium}
\newtheorem{cor}[equation]{Corollary}
\newtheorem{proposition}[equation]{Proposition}

\theoremstyle{definition}
\newtheorem{example}[equation]{Example}
\newtheorem{defn}[equation]{Definition}
\newtheorem{rem}[equation]{Remark}

\newcommand{\thmref}[1]{Theorem~\ref{#1}}
\newcommand{\propref}[1]{Proposition~\ref{#1}}
\newcommand{\lemref}[1]{Lemma~\ref{#1}}

\newcommand{\defref}[1]{Definition~\ref{#1}}

\newcommand{\exref}[1]{Example~\ref{#1}}

\newcommand{\secref}[1]{Section~\ref{#1}}

\newcommand\aut{\operatorname{\mathsf{Aut}}}

\newcommand{\bded}{{\mathfrak{BA}}}
\newcommand{\bdd}{{\mathfrak B}}

\newcommand{\C}{\mathbb{C}}

\newcommand\ch{\check}

\newcommand\fnst{{\mathfrak A}}
\newcommand\ftry{{\mathfrak F}}
\newcommand\full{{\mathfrak W}}

\renewcommand\L{{\mathsf L}}
\newcommand\LL{{\mathfrak L}}

\newcommand\M{{\mathsf M}}
\newcommand\mthr{{\mathfrak M}}

\newcommand{\mapstoto}{\mathop{\,\mapstochar\relbar\joinrel\relbar\joinrel\rightsquigarrow\,}}

\renewcommand\P{{\mathbf P}}
\def\pair<#1>{{\langle\!\langle}#1{\rangle\!\rangle}}
\newcommand{\prq}{\preccurlyeq}

\renewcommand\S{{\mathsf S}}
\newcommand\setsuch[2]{{\left\{#1\left|\,#2\right\}\right.}}

\newcommand\supp{\operatorname{\mathsf{supp}}}
\newcommand\sym[1]{{\mathsf{Sym}(#1)}}

\newcommand\wt{\widetilde}

\newcommand{\Z}{\mathbb{Z}}

\begin{document}

\title{On amenability of automata groups}

\author[L. Bartholdi]{Laurent Bartholdi}
\address{Institut de Math\'ematiques B, \'Ecole Polytechnique F\'ed\'erale de Lausanne, CH-1015 Lausanne, Switzerland}
\email{laurent.bartholdi@gmail.com}

\author[V. A. Kaimanovich]{Vadim A. Kaimanovich}
\address{Mathematics, Jacobs University Bremen, Campus Ring 1, D-28759, Bremen, Germany}
\email{v.kaimanovich@jacobs-university.de}

\author[V. V. Nekrashevych]{Volodymyr V. Nekrashevych}
\address{Department of Mathematics, Texas A\&M University, College Station, TX 77843-3368, USA}
\email{nekrash@math.tamu.edu}

\begin{abstract}
We show that the group of bounded automatic automorphisms of a rooted tree is
amenable, which implies amenability of numerous classes of groups generated by
finite automata. The proof is based on reducing the problem to showing
amenability just of a certain explicit family of groups (``Mother groups'')
which is done by analyzing the asymptotic properties of random walks on these
groups.
\end{abstract}

\date{\today}

\maketitle

\section*{Introduction}

Since the definition of amenability of groups by von Neumann, many attempts
were made to understand amenability and to describe it in various ways. The
class of countable amenable groups is, from the analytical point of view, the
most natural extension of the class of finite groups. Namely, according to the
original definition of von Neumann \cite{vonNeumann29} these are the groups
which admit an invariant mean (a finitely additive probability measure). An
amenable group does not contain non-abelian free subgroups. However, the
converse is not true, and, in spite of existence of numerous geometric or
analytic criteria of amenability (Tarski, F\o lner, Reiter, Kesten, etc.),
there is no satisfactory ``algebraic'' description of the class of amenable
groups. From this point of view, essentially new examples of amenable and
non-amenable groups are still of great interest.

It was proved already by von Neumann that the class of amenable groups is
closed under passing to subgroups, quotients, group extensions and inductive
limits. Therefore, starting from ``obviously'' amenable groups (which are
finite groups and the infinite cyclic group), one can construct many examples
of amenable groups. The groups obtained in this way are called
\emph{elementary amenable groups}, following Day~\cite{Day57}.

It was an open question for a long time whether every amenable group is
elementary amenable. The first example of an amenable but not elementary
amenable group is the group of intermediate growth found by
Grigorchuk~\cite{Grigorchuk80,Grigorchuk85} (every group of subexponential
growth is amenable by F\o lner's criterion). Later, a finitely presented
amenable extension of the Grigorchuk group was constructed
in~\cite{Grigorchuk98}.

Groups of subexponential growth can also be considered as ``obviously''
amenable. Therefore, a natural goal
(see~\cite{Grigorchuk98,Ceccherini-Grigorchuk-delaHarpe99}) is to find
amenable groups, which are not \emph{subexponentially elementary}, i.e., can
not be obtained from the groups of subexponential growth by the aforementioned
amenability preserving operations.

The first example of such a group is the \emph{iterated monodromy group} of
the polynomial $z^2-1$ known as the \emph{Basilica group}. It was shown
in~\cite{Grigorchuk-Zuk02a} that it does not belong to the class of
subexponentially elementary groups, whereas it was proved
in~\cite{Bartholdi-Virag05} that the Basilica group is amenable.

The aim of the present paper is to establish amenability of a vast
class of groups generated by finite automata. Namely,

\medskip

\textbf{Main Result.} \emph{Any group generated by a finite bounded
  automaton is amenable}.

\medskip

The class of groups generated by bounded automata was defined by Sidki
in~\cite{Sidki00} (see \cite{Bondarenko-Nekrashevych03} for an interpretation
of these groups in terms of fractal geometry). Most of the well-studied
examples of groups of finite automata belong to this class. In particular, it
contains the Grigorchuk group, the Gupta--Sidki group, the Basilica group, all
iterated monodromy groups of postcritically finite polynomials, and many other
examples (see \secref{sec:examples} for more details). For most of them
(except for the situation when the group happens to have subexponential
growth) our proof is the only proof of amenability known so far.

Note that the groups generated by bounded automata form a subclass of the
class of \emph{contracting self-similar groups}
(see~\cite{Bondarenko-Nekrashevych03,Nekrashevych05}). It is still an open
question whether all contracting groups are amenable.

\medskip

Any group generated by a bounded automaton is contained in the countable group
$\bded$ of \emph{all} bounded automatic automorphisms of a rooted homogeneous
tree, and it is amenability of the latter that we actually establish
(\thmref{thm:main}). Our proof is based on two ideas. First we reduce the
question about amenability of $\bded$ to that about amenability just of a
certain special family of groups which we call \emph{Mother groups}
(\thmref{thm:reduc}). Then we deduce amenability of these groups from an
analysis of the asymptotic properties of \emph{random walks} on them
(\thmref{thm:amen}). Namely, we show, by applying a self-similarity argument,
that the growth of the entropy of the $n$-fold convolutions of a certain
probability measure is sublinear, which, by the general entropy theory (see
\cite{Kaimanovich-Vershik83}), implies amenability. Therefore, our proof
ultimately uses Reiter's characterization of amenability: we construct a
sequence of approximately invariant measures on the group as the convolution
powers of a certain finitely supported one. A constructive version of this
argument based on entropy estimates yields explicit bounds for the return and
isoperimetric profiles on the Mother groups (\thmref{th:profile}). On the
other hand, we do not obtain any explicit description of the F\o lner sets.

\medskip

The paper has the following structure. In \secref{sec:main} we formulate the
main result and give a number of examples of its applications. The background
on bounded automata is discussed in \secref{sec:bounded}. In \secref{sec:fg}
we reduce the problem to amenability of Mother groups, which is established in
\secref{sec:amen mother} by an analysis of random walks on these groups.
Finally, we relegate certain auxiliary estimates of the entropy of
convolutions on general countable groups to the Appendix.

\medskip

The authors express their debt and gratitude to B\'alint Vir\'ag, who
generously contributed valuable insight to this paper.


\section{Statement of the main result} \label{sec:main}

\subsection{Decomposition of tree automorphisms}

Let $X$ be a finite set called the \emph{alphabet}. The associated
\emph{homogeneous rooted tree} $T=T(X)$ is the (right) Cayley graph of the
free monoid $X^*$ (so that one connects $w$ to $wx$ by an edge for all $w\in
X^*,x\in X$). Each vertex $w\in T\cong X^*$ is the root of the subtree $T_w$
which consists of all the words beginning with $w$. The map $w'\mapsto ww'$
provides then a canonical identification of the trees $T$ and $T_w$.

Let us denote by $\full=\full(X)=\aut(T)$ the \emph{full automorphism group}
of the tree $T$. Any automorphism $\alpha\in\full$ obviously preserves the
first level of $T$, i.e., determines a permutation
$\sigma=\sigma_\alpha\in\sym X$. Thus, any subtree $T_x$, for $x\in
X$, is mapped by
$\alpha$ onto the subtree $T_{\sigma(x)}$, which, in view of the canonical
identification of both $T_x$ and $T_{\sigma(x)}$ with $T$, gives rise to an
automorphism $\alpha_x\in\full$. Conversely, any set of data consisting of
automorphisms $\alpha_x\in\full$ for all $x\in X$ and a permutation
$\sigma\in\sym X$
determines in the above way an automorphism of $T$. Thus, we have a one-to-one
correspondence
\begin{equation} \label{eq:dec}
\alpha\mapsto\pair<\alpha_x>_{x\in X}\sigma_\alpha
\end{equation}
(called \emph{decomposition}) between $\full$ and $\full^X\times\sym
X$. We
shall omit $\sigma_\alpha$ in this notation if it is the identity
permutation. In terms of this decomposition the group multiplication in
$\full$ takes the form
\[
\pair<\alpha_x>\sigma_\alpha\cdot \pair<\beta_x>\sigma_\beta=
\pair<\alpha_x\beta_{\sigma_\alpha(x)}>\sigma_\alpha\sigma_\beta,
\]
which means that decomposition \eqref{eq:dec} is in fact a group isomorphism
between $\full$ and the \emph{permutational wreath product} $\full\wr\sym
X=\full^X\rtimes\sym X$. We shall often identify $\full$ with $\full\wr\sym X$
by the decomposition isomorphism \eqref{eq:dec}, writing
$\alpha=\pair<\alpha_x>\sigma_\alpha$, especially in recursive definitions of
automorphisms of the tree~$T$. See \cite{Bartholdi-Grigorchuk00} or
\cite[Section~2.6]{Nekrashevych05} for more on recursions of this kind and
\exref{ex:basilica} for a more detailed description of this procedure for a
concrete group.

\subsection{Generalized permutation matrices} \label{sec:gen perm}

It will also be convenient to use the matrix notation by presenting an element
$\alpha=\pair<\alpha_x>\sigma$ as a \emph{generalized permutation matrix}
$M=M^\alpha$ of order $|X|$ with entries
\[
M_{xy}=\left\{\begin{array}{ll}\alpha_x &\text{if $y=\sigma(x)$,}\\
0 & \text{otherwise.}\end{array}\right.
\]
We identify in this way the group $\full\wr\sym X$ with a subgroup of the
matrix algebra $\M_{|X|}(\C[\full])$ over the group ring of the group $\full$.
It is easy to see that this identification is actually a group isomorphism.

More generally, given an arbitrary group $G$, we shall denote by
$$
\sym {X;G}:= G\wr\sym X=G^X\rtimes\sym X
$$
the \emph{group of generalized permutation matrices} of order $|X|$ with
non-zero entries from the group $G$. Obviously, application of the
\emph{augmentation map} (which consists in replacing all group elements
with~1) to a generalized permutation matrix yields a usual permutation matrix,
which corresponds to the natural projection of $\sym {X;G} \cong
G^X\rtimes\sym X$ onto $\sym X$.

\subsection{Automatic and bounded automorphisms} \label{sec:bdd}

Recall that given an automorphism $\alpha\in\full$ any symbol $x\in X$
determines an associated automorphism $\alpha_x\in\full$ by decomposition
\eqref{eq:dec}. In the same way such an automorphism $\alpha_w\in\full$ (the
\emph{state} of $\alpha$ at the point $w$) can be defined for an arbitrary
point $w\in T\cong X^*$, by restricting the automorphism $\alpha$ to the
subtree $T_w$ with the subsequent identification of both $T_w$ and its image
$\alpha(T_w)=T_{\alpha(w)}$ with $T$. Equivalently, $\alpha_w$ can be obtained
from iterating decomposition \eqref{eq:dec}, see \exref{ex:basilica} and the
proof of \thmref{thm:reduc}.

If the \emph{set of states} of $\alpha$
$$
\S(\alpha)=\setsuch{\alpha_w}{w\in T}\subset\full
$$
is finite, then the automorphism $\alpha$ is called \emph{automatic}. The set
of all automatic automorphisms of the tree $T$ forms a countable subgroup
$\fnst=\fnst(X)$ of $\full=\full(X)$ (see \secref{sec:bounded} for more
details).

An automorphism $\alpha$ is called \emph{bounded} if the sets $\setsuch{w\in
X^n}{\alpha_w\neq1}$ have uniformly bounded cardinalities over all $n$. The
set of all bounded automorphisms forms a subgroup $\bdd=\bdd(X)$ of
$\full=\full(X)$. We denote by $\bded=\bded(X)=\bdd(X)\cap\fnst(X)$ the
\emph{group of all bounded automatic automorphisms} of the homogeneous rooted
tree $T$.

\medskip

\noindent We can now formulate the main result of the paper

\begin{theorem}\label{thm:main}
The group $\bded(X)$ is amenable for any finite set $X$.
\end{theorem}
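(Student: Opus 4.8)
The plan is to prove amenability of $\bded(X)$ by following the two-step strategy advertised in the introduction, so I would first set up the reduction to Mother groups and then attack those groups via random walk entropy.

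The plan is to establish \thmref{thm:main} in two stages, exactly as outlined in the introduction, invoking throughout the foundational fact (due to von Neumann) that the class of amenable groups is closed under passage to subgroups, quotients, extensions, and directed unions. The whole strategy is to isolate, inside the elaborate self-similar structure of $\bded$, a fixed finite list of explicit groups whose amenability can be attacked directly, and then to propagate amenability back up by the closure properties.

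First I would exploit the recursive structure of $\bded$ furnished by decomposition \eqref{eq:dec}. A bounded automatic automorphism has only finitely many states, and its first-level sections $\alpha_x$ are again bounded and automatic, so $\bded$ is a self-similar group sitting inside the wreath product $\bded\wr\sym X=\sym{X;\bded}$. The boundedness condition forces the number of non-trivial states to remain uniformly small as one descends the tree, and tracking how these supported states propagate should let me realize $\bded$ as a directed union of finitely generated subgroups, each assembled by iterated wreath-product extensions from a fixed finite collection of concrete generalized-permutation-matrix groups of the form $\sym{X;\cdot}$ — the Mother groups $\mthr$. This is precisely the content of \thmref{thm:reduc}: amenability of $\bded$ is reduced to amenability of the Mother groups, via the closure properties above. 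I would carry out this combinatorial reduction carefully, verifying that every finitely generated subgroup of $\bded$ is subordinate in this sense to the Mother groups.

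The second and harder stage is to prove that each Mother group is amenable (\thmref{thm:amen}), and here the method is analytic rather than algebraic. I would fix a symmetric, finitely supported, non-degenerate probability measure $\mu$ on the Mother group and study the random walk it drives. Using the self-similar embedding into $\sym{X;\mthr}$, a single step of the walk induces, after projecting to the first-level coordinates, walks governed by a closely related measure; this self-similarity yields a recursive control on the entropy $H(\mu^{*n})$ of the $n$-fold convolutions. The aim is to convert this recursion into the estimate $H(\mu^{*n})/n\to 0$, that is, vanishing asymptotic entropy $h(\mu)=0$. By the entropy theory of random walks (Kaimanovich--Vershik \cite{Kaimanovich-Vershik83}), $h(\mu)=0$ forces the Poisson boundary to be trivial, equivalently gives the Liouville property, which in turn yields Reiter's condition: the normalized convolution powers $\mu^{*n}$ themselves form an approximately invariant sequence witnessing amenability. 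Making the entropy bookkeeping quantitative would additionally deliver the return- and isoperimetric-profile estimates of \thmref{th:profile}.

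The main obstacle is this entropy estimate. Setting up a clean self-similar recursion for $H(\mu^{*n})$ demands controlling how both mass and entropy are distributed among the first-level sections $\alpha_x$ of a random element, and in particular accounting for the permutation part $\sigma$ and for the number of non-trivial sections — which is exactly where boundedness is essential, since it keeps that number controlled. Turning the resulting recursive inequality into genuine sublinearity, rather than a mere linear bound $H(\mu^{*n})\le Cn$, is the delicate point; closing the argument will require the auxiliary inequalities comparing the entropy of a convolution on $\sym{X;G}$ with the entropies of its induced coordinate measures, which I would relegate to the Appendix.
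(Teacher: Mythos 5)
Your outline reproduces the paper's two-stage skeleton (reduction to Mother groups, then random-walk entropy), but at the decisive point --- the entropy estimate for $\mthr$ --- it restates the problem rather than solving it; the one idea that makes the paper's argument close is absent. The paper does not work with a generic symmetric, finitely supported, non-degenerate measure. It takes the specific measure $\mu=\mu_A\mu_B$, where $\mu_A,\mu_B$ are the \emph{uniform} measures on the two finite subgroups $A,B$ generating $\mthr$; uniformity makes these measures idempotent ($\mu_A\mu_A=\mu_A$, $\mu_B\mu_B=\mu_B$), and this is exploited twice. First, with this choice the matrix $M^\mu=M^{\mu_A}M^{\mu_B}$ has identical rows, so the induced RWIDF on $\mthr\times X$ projects to an honest random walk on $\mthr$ driven by $\wt\mu=\frac{d-1}{d}\mu_A+\frac{1}{d}\mu_B$, and the row-entropy bound (Scholium~\ref{sch:A0}) gives $F(n)\le d\,\wt F(n)+d\log d$ for $F(n)=H(\mu^n)$, $\wt F(n)=H(\wt\mu^n)$. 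Second --- and this is the step your ``recursive control'' cannot reach --- idempotence collapses $\wt\mu^n=\bigl(\frac{d-1}{d}\mu_A+\frac{1}{d}\mu_B\bigr)^n$ into a convex combination of \emph{alternating} products $\mu_A\mu_B\mu_A\cdots$ whose length is the number of switches in a Bernoulli sequence; this number concentrates near $\frac{2(d-1)}{d^2}n$ (Chebyshev), and an alternating product of length $i$ has entropy at most $F(\lfloor i/2\rfloor+1)$, whence $\wt F(n)\le F\bigl(\lfloor(\frac{d-1}{d^2}+\epsilon)n\rfloor\bigr)+\log(2n)$. Combining the two inequalities yields $h(\mthr,\mu)\le d\cdot\frac{d-1}{d^2}\,h(\mthr,\mu)=\frac{d-1}{d}\,h(\mthr,\mu)$, forcing $h(\mthr,\mu)=0$ and hence amenability by \thmref{thm:entr}. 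With an arbitrary measure, the self-similar embedding gives only a one-directional comparison between $h(\mu)$ and the entropy of a projected walk, with no mechanism relating the projected measure back to $\mu$; the M\"unchhausen trick requires a self-similar measure, and producing one is exactly the difficulty your plan defers to ``the main obstacle.'' That obstacle \emph{is} the theorem.

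Two further points are off. Boundedness plays no role in the entropy argument: it is used only in the reduction \thmref{thm:reduc}, through Sidki's structure theorem (\propref{pr:strbndd}: non-finitary bounded automatic states are eventually directed), the notions of depth and period, the passage to power alphabets $X^\ell$ and $(X')^{m'}$, and conjugation by the straightening automorphism $\delta$; the Mother group itself is then treated purely as a self-similar group generated by two finite subgroups, with no boundedness input --- so your claim that boundedness is ``exactly where'' the entropy recursion is controlled points at the wrong place. Also, the Mother groups are not ``a fixed finite list'': the reduction needs $\mthr(X^N)$ for an integer $N$ depending on the finitely generated subgroup (via the depths and periods of its generators), so \thmref{thm:amen} must be proved for every finite alphabet. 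Finally, a small but real slip: $\mu=\mu_A\mu_B$ is not symmetric (its reflection is $\mu_B\mu_A$); only $\wt\mu$ is, which is what the profile estimates of \thmref{th:profile} rely on.
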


\subsection{Examples} \label{sec:examples}

In the rest of this Section we describe some interesting finitely generated
subgroups of $\bded$, amenability of which follows from
Theorem~\ref{thm:main}. We define the generators of these groups by their
decomposition \eqref{eq:dec}.

\begin{example} \label{ex:basilica}
Let $|X|=2$, denote by $\sigma$ the non-trivial element of $\sym X$, and
define the automorphisms $a,b$ recursively by the relations
\[
a=\pair<b,1>,\qquad b=\pair<a,1>\sigma,
\]
or, in matrix terms,
\[
M^a=\left(\begin{array}{cc} b & 0\\ 0 &
  1\end{array}\right),\qquad
  M^b=\left(\begin{array}{cc} 0 & a\\ 1 & 0\end{array}\right).
\]
More precisely, application of the augmentation map to the above generalized
permutation matrices yields the usual permutation matrices of order 2 which
describe the action of $a$ and $b$ on the first level $X$ of the tree $T$.
Substitution of $M^a$ for $a$ and $M^b$ for $b$ gives the order 4 generalized
permutation matrices
$$
\begin{pmatrix}
  0 & a & 0 & 0 \\
  1 & 0 & 0 & 0 \\
  0 & 0 & 1 & 0 \\
  0 & 0 & 0 & 1 \\
\end{pmatrix}
\;, \;
\begin{pmatrix}
  0 & 0 & b & 0 \\
  0 & 0 & 0 & 1 \\
  1 & 0 & 0 & 0 \\
  0 & 1 & 0 & 0 \\
\end{pmatrix} \;,
$$
which, after applying the augmentation map, give rise to the usual order 4
permutation matrices describing the action of $a$ and $b$, respectively, on
the second level $X^2$ of the tree $T$ which extends the action on $X$. By
iterating this substitution once again we obtain the action of $a$ and $b$ by
permutations on $X^3$, and so on, so that in the limit we obtain automorphisms
of the full tree $T$. Note that the entries of the arising matrices are the
states of these automorphisms, and therefore we can immediately see that both
$a$ and $b$ are automatic and bounded.

The group $G=\langle a,b\rangle$ is called the \emph{Basilica group} (because
it is the iterated monodromy group of the \emph{Basilica polynomial} $z^2-1$),
it is contained in $\bded$, and it is amenable~\cite{Bartholdi-Virag05} but not
``subexponentially elementary amenable''~\cite{Grigorchuk-Zuk02a}.
\end{example}

\begin{example}
More generally, let $f(z)\in\C[z]$ be a \emph{postcritically finite} complex
polynomial, i.e., such that for every critical point $c$ of $f(z)$ the orbit
$\setsuch{f^{n}(c)}{n\ge 1}$ is finite. Let $P$ be the union of the orbits of
all the critical points of $f$. Given a point $t\in\C\setminus P$ the
fundamental group $\pi_1(\C\setminus P, t)$ naturally acts by monodromy on the
\emph{preimage tree} $T$, whose vertex set consists of all the pairs
$\{(f^{-n}(t),n)\}_{n\ge0}$ with edges joining $(\tau,n)$ and $(f(\tau),n-1)$.
The resulting group of automorphisms of the tree $T$ is called the
\emph{iterated monodromy group} of the polynomial $f$. For more on iterated
monodromy groups see~\cite{Nekrashevych05}. In particular, it is proved
in~\cite[Chapter~6]{Nekrashevych05} that iterated monodromy groups of
postcritically finite polynomials are subgroups of $\bded$, hence they are
amenable by Theorem~\ref{thm:main}.
\end{example}

\begin{example}
Let $X$ and $\sigma$ be as in \exref{ex:basilica}, and define the
automorphisms $a,b$ by putting
\[
a=\pair<1,a>\sigma,\quad b=\pair<1,b^{-1}>\sigma\;,
\]
i.e.,
$$
M^a = \left(\begin{array}{cc}
  0 & 1\\ a & 0\end{array}\right) \;, \quad M^b = \left(\begin{array}{cc}
  0 & 1\\ b^{-1} & 0\end{array}\right)\;.
$$
The group $G=\langle a,b\rangle$ determined by the above presentation is
contained in $\bded$, and it was studied by Brunner, Sidki and Vieira
in~\cite{Brunner-Sidki-Vieira99}. Later da~Silva showed in her
thesis~\cite{daSilva01} that $G$ does not contain any non-abelian free
subgroups. Since $G$ is amenable by Theorem~\ref{thm:main}, we obtain another
proof of that result.
\end{example}

\begin{example}
Let $\sigma\in\sym X$ be a cyclic permutation of the alphabet $X$, and choose
$\varepsilon_2,\dots,\varepsilon_d$ from the cyclic group $\Z/d$, where
$d=|X|$. Let $G=\langle a,b\rangle$ be the group generated by two order $d$
elements determined by the decompositions
$$
a=\pair<1,1,\dots,1>\sigma \;, \quad
b=\pair<b,a^{\varepsilon_2},\dots,a^{\varepsilon_d}> \;.
$$
In particular, if $d=3$ and $(\varepsilon_i)=(1,-1)$ then $G$ is the infinite
$2$-generated $3$-group studied by Gupta and Sidki in~\cite{Gupta-Sidki83},
and if $d=3$ and $(\varepsilon_i)=(1,0)$ then $G$ is the group of intermediate
growth studied by Fabrykowski and Gupta in~\cite{Fabrykowski-Gupta91}. This
family of groups was called \emph{GGS} groups (referring to Grigorchuk, Gupta
and Sidki) by Baumslag \cite{Baumslag93}. They are all subgroups of $\bded$.
\end{example}

\begin{example}
Let $A$ be a subgroup of $\sym X$. We shall consider two embeddings
$\theta_1,\theta_2$ of $A$ into $\full$ determined by the decompositions
$$
\theta_1(a)=\pair<1,1,\dots,1>a \;,\quad
\theta_2(a)=\pair<\theta_1(a),\theta_2(a),1,\dots,1> \;,
$$
respectively, and then set $G=\langle\theta_1(A),\theta_2(A)\rangle$. These
groups were considered by Neumann in~\cite{Neumann86} to answer some questions
of ``largeness'' formulated by Edjvet and Pride, and more recently by the
first author~\cite{Bartholdi03} to construct groups of exponential word growth
for which the infimum of the growth rates is $1$ (also see \cite{Wilson04}).
All of these groups are subgroups of $\bded$.
\end{example}

\section{Bounded automata} \label{sec:bounded}

In this Section we recall some standard facts about automata,
see~\cite{Grigorchuk-Nekrashevich-Sushchanskii00} and~\cite{Sidki00} for
further details.

\subsection{Automata and automorphisms}

\begin{defn}
An \emph{automaton} $\Pi$ is a map of the product $X\times Q$ of two sets to
itself. One of these sets $X$ is called the \emph{alphabet} and the other one
$Q$ is called the \emph{state space} of the automaton. If $Q$ is finite then the
automaton is called \emph{finite}. The components
$$
\Pi_\oblong:X\times Q\to X \;, \quad \Pi_\bullet:X\times Q\to Q
$$
of the map $\Pi$ are called the \emph{output} and the \emph{transition}
functions of the automaton, respectively. An automaton $\Pi$ is
\emph{invertible} if $\Pi_\oblong(\cdot,q)$ is a bijection $X\to X$ for all
$q\in Q$. We shall always impose that condition.
\end{defn}

We interpret an automaton $\Pi$ as a machine which, being in state $q$ and
reading an input letter $x$, goes to state $\Pi_\bullet(x, q)$ and outputs the
letter $\Pi_\oblong(x,q)$. In this way it can also process words, which gives
rise to the automaton $\Pi^*$ with extended alphabet $X^*$ and same state
space $Q$. Its output and transition functions $\Pi_\oblong^*,\Pi_\bullet^*$
are extensions of the respective original functions $\Pi_\oblong,\Pi_\bullet$
and are defined recursively as
\begin{equation} \label{eq:*}
\begin{aligned}
 \Pi^*_\oblong(x_1x_2\dots x_n,q) &= \Pi_\oblong(x_1,q) \Pi^*_\oblong\left(x_2\dots x_n,\Pi_\bullet(x_1,q)\right) \;, \\
 \Pi_\bullet^*(x_1x_2\dots x_n,q) &= \Pi_\bullet^*(x_2\dots x_n,\Pi_\bullet(x_1,q)) \;.
\end{aligned}
\end{equation}
Invertibility of $\Pi$ implies invertibility of the extended automaton $\Pi^*$
as well, whence

\begin{defn} \label{def:autom}
A state $q$ of an automaton $\Pi$ determines an automorphism
$\Pi^*_\oblong(\cdot,q)$ of the tree $T(X)$ over its alphabet $X$. Such an
automorphism is called \emph{automatic}. Below we shall always identify the
state $q$ with the associated automorphism $\Pi^*_\oblong(\cdot,q)$, i.e., we
shall assume $Q\subset\full$.
\end{defn}

\begin{proposition} \label{prop:equiv}
An automorphism $\alpha\in\full$ is automatic in the sense of
\defref{def:autom} if and only if it is automatic in the sense of the
definition given in \secref{sec:bdd}, i.e., if and only if its set of states
$\S(\alpha)$ is finite.
\end{proposition}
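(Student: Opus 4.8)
The plan is to show the two notions of "automatic" coincide by relating the state space $Q$ of an automaton to the set of states $\S(\alpha)$ of the resulting automorphism, establishing mutual finiteness.

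First I would prove the forward direction. Suppose $\alpha\in\full$ arises as $\Pi^*_\oblong(\cdot,q_0)$ for some state $q_0$ of a finite automaton $\Pi$ with state space $Q\subset\full$. The key observation is that the state of $\alpha$ at a word $w=x_1x_2\cdots x_n$, as defined via iterated decomposition \eqref{eq:dec} in \secref{sec:bdd}, equals the automorphism determined by the state $\Pi^*_\bullet(w,q_0)\in Q$ that the machine reaches after reading $w$. I would verify this by induction on the length $n$ using the recursions \eqref{eq:*}: reading one letter $x_1$ sends $q_0$ to $\Pi_\bullet(x_1,q_0)$, and by the definition of $\Pi^*_\oblong$ this transition is exactly the passage from $\alpha$ to its first-level section $\alpha_{x_1}$, so that $\alpha_w=\Pi^*_\bullet(w,q_0)$ as automorphisms. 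Hence $\S(\alpha)\subseteq Q$, and since $Q$ is finite, $\S(\alpha)$ is finite.

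For the converse, suppose $\S(\alpha)$ is finite, and build an automaton realizing $\alpha$ by taking $Q=\S(\alpha)$ as the state space. Each state $\beta\in\S(\alpha)$ decomposes as $\beta=\pair<\beta_x>\sigma_\beta$, where every $\beta_x$ is itself a state of $\alpha$ (being a state of a state of $\alpha$, and the set of states is closed under taking further sections), so $\beta_x\in Q$. I would then define $\Pi_\oblong(x,\beta)=\sigma_\beta(x)$ and $\Pi_\bullet(x,\beta)=\beta_x$; invertibility holds because each $\sigma_\beta$ is a permutation of $X$. A straightforward induction, again using \eqref{eq:*}, shows that this automaton, started in state $\alpha$, produces exactly the automorphism $\alpha$, so $\alpha$ is automatic in the sense of \defref{def:autom}.

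The essential point in both directions, and the one deserving the most care, is the compatibility between the recursive output/transition functions in \eqref{eq:*} and the decomposition \eqref{eq:dec}: I must check that reading a letter in the automaton corresponds precisely to passing to a first-level section of the tree automorphism, so that the set-theoretic state $\Pi^*_\bullet(w,q)$ is identified with the automorphism-theoretic state $\alpha_w$. Once this dictionary is established, the equivalence reduces to the finiteness of $Q$ versus $\S(\alpha)$, which matches because $\S(\alpha)$ is precisely the set of reachable states. I do not anticipate a genuine obstacle here; the result is essentially a bookkeeping verification, and the main task is to state the inductive identification of states cleanly rather than to overcome any conceptual difficulty.
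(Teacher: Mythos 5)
Your proposal is correct and follows essentially the same route as the paper: the forward direction identifies $\alpha_w$ with $\Pi^*_\bullet(w,q_0)$ so that $\S(\alpha)\subseteq Q$, and the converse builds an automaton on the state space $\S(\alpha)$ via $\Pi_\oblong(x,\beta)=\sigma_\beta(x)$, $\Pi_\bullet(x,\beta)=\beta_x$, using closure of $\S(\alpha)$ under sections. The only difference is one of exposition: you spell out the inductive dictionary between \eqref{eq:*} and \eqref{eq:dec} that the paper treats as immediate.
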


\begin{proof}
If $\alpha=\Pi^*_\oblong(\cdot,q)$ is automatic, then $\S(\alpha)$ is
precisely the set of states of the automaton $\Pi$ attainable from the state
$q$.

Conversely, given an automorphism $\alpha\in\full$, for any $q\in\S(\alpha)$
the associated decomposition $q=\pair<q_x>_{x\in X}\sigma_q$ obviously
contains only elements of $\S(\alpha)$, so that we have maps
$$
\Pi_\oblong(x,q)=\sigma_q(x) \;, \quad  \Pi_\bullet(x,q)=q_x \;,
$$
which, if the set $\S(\alpha)$ is finite, determine an automaton $\Pi$ with
alphabet $X$ and state space $\S(\alpha)$ with the property that
$\Pi^*_\oblong(\cdot,q)=q$ for all $q\in\S(\alpha)$.
\end{proof}

\subsection{Growth of automorphisms}

\begin{defn}
The \emph{growth function} $\Gamma_\alpha$ of an automorphism $\alpha$ is
defined as the growth function of the language
$$
\L(\alpha)=\setsuch{w\in X^*}{\alpha_w\neq 1}\;,
$$
i.e.,
\begin{equation} \label{eq:growth funct}
\Gamma_\alpha(n)=\left|\setsuch{w\in X^n}{\alpha_w\neq1}\right|.
\end{equation}
\end{defn}

Denote by $\bdd_d$ the set of automorphisms whose growth is bounded by a
polynomial of degree $d$, so that, in particular, $\bdd=\bdd_0$ is the set of
\emph{bounded automorphisms} introduced in \secref{sec:bdd}, and let
$\ftry=\bdd_{-1}$ be the set of \emph{finitary automorphisms}, i.e., the ones
for which the growth function \eqref{eq:growth funct} is eventually $0$
(obviously, $\ftry\subset\fnst$). Note that if $\alpha$ is automatic, then the
language $\L(\alpha)$ is regular (since it is recognized by a finite
automaton), so that in this case the growth function $\Gamma_\alpha$ is either
polynomial or exponential.

It is easy to see that the growth function is \emph{symmetric} and
\emph{subadditive} with respect to $\alpha$, i.e.,
$\Gamma_\alpha=\Gamma_{\alpha^{-1}}$ and
$\Gamma_{\alpha\beta}\le\Gamma_\alpha+\Gamma_\beta$, so all subsets $\bdd_d$
are subgroups of $\full$. The groups $\bdd_d$ do not contain non-abelian free
subgroups \cite{Sidki04}.

We shall say that an automorphism $\alpha\in\full$ is \emph{directed} if there
exists a word $w_0\in X^l$ such that $\alpha_{w_0}=\alpha$, and all the other
states $\alpha_w$ with $w\in X^l$ are finitary. The smallest number $l$ with
this property is called the \emph{period} of $\alpha$.

The following description of the group $\bded=\bdd\cap\fnst$ follows
from~\cite[Corollary~14]{Sidki00}.

\begin{proposition} \label{pr:strbndd}
An automatic automorphism $\alpha$ is bounded if and only if it is either
finitary or there exists an integer $m$ such that all non-finitary states
$\alpha_w$ with $w\in X^m$ are directed.
\end{proposition}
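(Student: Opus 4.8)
The plan is to prove the two implications of the equivalence separately, the ``only if'' direction being the substantial one.

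For the ``if'' direction, the key sub-step is that any \emph{directed} automorphism $\beta$ is bounded. Writing its period as $l$ and its recurrence word as $w_0\in X^l$, I would use the identity $\Gamma_\beta(n)=\sum_{w\in X^l}\Gamma_{\beta_w}(n-l)$ (valid for $n\ge l$ since $\beta_{uv}=(\beta_u)_v$) and isolate the term $w=w_0$, for which $\beta_{w_0}=\beta$, to get $\Gamma_\beta(n)=\Gamma_\beta(n-l)+\sum_{w\ne w_0}\Gamma_{\beta_w}(n-l)$. As every $\beta_w$ with $w\ne w_0$ is finitary, its growth function vanishes for large argument, so $\Gamma_\beta(n)=\Gamma_\beta(n-l)$ once $n$ is large; hence $\Gamma_\beta$ is eventually $l$-periodic, in particular bounded, so $\beta\in\bdd$. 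Granting this, a finitary $\alpha$ is trivially bounded, while if $m$ is as in the statement then $\Gamma_\alpha(n)=\sum_{w\in X^m}\Gamma_{\alpha_w}(n-m)$ is a finite sum of bounded terms — the finitary $\alpha_w$ contributing $0$ eventually and the directed ones being bounded by the sub-step — so $\alpha\in\bdd$.

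For the ``only if'' direction, assume $\alpha$ is bounded and not finitary. Since $\alpha$ is automatic, $\S(\alpha)$ is finite, and because $\Gamma_{\alpha_w}(n)\le\Gamma_\alpha(n+|w|)$ every state is itself bounded. I would analyze the transition (Moore) diagram of the automaton on the finite vertex set $\S(\alpha)$, with the trivial automorphism $1$ as an absorbing state, focusing on its \emph{non-trivial cycles}, i.e.\ closed directed paths avoiding $1$ (any such cycle automatically consists entirely of non-trivial states). The crux, which I expect to be the main obstacle and which is essentially the content of \cite[Corollary~14]{Sidki00}, is that boundedness forces the cyclic structure to be as simple as possible; I would establish this by exhibiting growth lower bounds that contradict boundedness. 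First, no non-trivial state lies on two distinct cycles: otherwise concatenating the two loops in all orders produces $\gtrsim 2^{n/l}$ length-$n$ words returning to that state, forcing exponential growth. Equivalently each non-trivial cycle is simple and each of its vertices has a unique outgoing letter keeping it on the cycle. Second, there is no directed path from one non-trivial cycle to a different one: otherwise, looping on the first cycle for a variable number of turns before travelling to and looping on the second yields $\gtrsim n$ length-$n$ words ending at a non-trivial state, forcing at least linear growth. Third, consequently, any state reachable from a cycle but not lying on it can reach no cycle at all, hence is finitary.

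Granting these three facts, a state $q$ lying on a cycle of length $l$ is automatically directed: reading the unique cyclic word $w_0\in X^l$ returns $q$ to itself, while every other $w\in X^l$ must leave the cycle and thus land in a finitary state, so $q_w$ is finitary. To finish I would choose the level $m$. As $\S(\alpha)$ is finite, the states occurring at infinitely many levels are exactly those reachable from $\alpha$ through some cycle; every other state occurs at only boundedly many levels, so for all sufficiently large $m$ each $\alpha_w$ with $w\in X^m$ is of the first kind. For such an $m$, a non-finitary $\alpha_w$ reaches a cycle and, being reachable through a cycle while off-cycle states are finitary by the third fact, must itself lie on a cycle; by the preceding remark it is therefore directed. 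This produces the required $m$ and completes the argument.
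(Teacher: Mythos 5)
The paper never proves this proposition itself: it is stated as following from \cite[Corollary~14]{Sidki00}, so there is no internal argument to compare yours against. Your proposal is in effect a self-contained reconstruction of Sidki's proof, and its architecture is sound. The ``if'' direction (eventual periodicity of $\Gamma_\beta$ for a directed $\beta$, then the level decomposition $\Gamma_\alpha(n)=\sum_{w\in X^m}\Gamma_{\alpha_w}(n-m)$) is correct as written. The ``only if'' direction correctly isolates the dichotomy that drives Sidki's classification: a non-trivial state lying on two distinct cycles of the Moore diagram forces exponential growth; a directed path between two distinct non-trivial cycles forces at least linear growth; hence for a bounded automatic automorphism the non-trivial cycles are pairwise disjoint and mutually unreachable, every off-cycle state reachable from a cycle is finitary, and every sufficiently deep non-finitary state lies on a cycle and is therefore directed. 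What your route buys is independence from the reference, and it makes visible exactly where boundedness enters; what the citation buys the authors is brevity and access to the stronger statement (Sidki classifies all polynomial activity degrees, of which this proposition is the degree-$0$ case).

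Two details need shoring up before your sketch is airtight. First, in both counting arguments the assertion that distinct concatenation patterns yield distinct words is not a fact about words alone: for instance $u_1=ab$, $v=a$, $u_2=ba$ satisfy $u_1v=vu_2$, so the family $u_1^av u_2^b$ with $2a+2b$ fixed collapses to a single word. You must invoke determinism of the automaton: arrange the two loops (resp.\ the loop and the connecting path, taken after its last exit from the first cycle) so that they diverge at a common vertex, where by determinism their labels have distinct first letters; the standard cancellation induction then shows that the map from patterns to words is injective, justifying the bounds $\gtrsim 2^{n/l}$ and $\gtrsim n$. (In the linear count, $al_1+bl_2=n-|v|$ is solvable only for $n$ in an arithmetic progression, which still contradicts boundedness, since boundedness is a uniform bound over all $n$.) Second, you use throughout, implicitly, that a state is finitary if and only if it cannot reach a cycle avoiding the absorbing trivial state $1$; this finiteness/pigeonhole fact should be stated once, since it is what converts ``can reach no cycle'' into ``finitary'' in your third claim, and what makes ``non-finitary'' mean ``reaches a non-trivial cycle'' in your final paragraph.
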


\begin{defn} \label{def:depth}
By using \propref{pr:strbndd} we can now define the \emph{depth} of an
arbitrary automatic bounded automorphism $\alpha$: if $\alpha$ is finitary,
then its \emph{finitary depth} is the smallest integer $m$ such that all the
states $\alpha_w,\,w\in X^m$ are trivial; otherwise the \emph{bounded depth}
of $\alpha$ is the smallest integer $m$ from \propref{pr:strbndd}.
\end{defn}


\section{Finitely generated subgroups of $\bded$ and the Mother group}\label{sec:fg}

We show in this Section that a finitely generated group of bounded
automorphisms can be put into a particularly simple form.

\subsection{The Mother group}

\begin{defn}[``Mother group'']\label{defn:mother}
  Let $X$ be a finite set with a distinguished element $o\in X$, and
  put $\overline X=X\setminus\{o\}$. Set $A=\sym X$ and
  $B=\sym{\overline X}\wr A=\sym{\overline X;A}$, and recursively
  embed the groups $A$ and $B$ into $\full(X)$ as
  $$A\ni a\mapsto(1,\dots,1)a \quad \text{ and } \quad
  B\ni b=(b_2,\dots,b_d)\sigma\mapsto(b,b_2,\dots,b_d)\sigma,$$
  assuming that $X=\{o=1,\dots,d\}$. Still in that notation, the
  matrix presentations of $a,b$ are given by
$$
M^a = \phi_A(a) \;, \quad M^b=\begin{pmatrix} b & 0 \\ 0 & \phi_B(b)
\end{pmatrix} \;,
$$
where $\phi_A(a),\phi_B(b)$ are, respectively, the permutation and the
generalized permutation matrices corresponding to $a\in A,b\in B$. Then the
\emph{Mother group} $\mthr=\mthr(X)=\langle A, B\rangle$ is the subgroup of
$\full$ generated by the finite groups $A$ and $B$.
\end{defn}

A direct verification shows that both groups $A,B$ are contained in $\bded$,
whence

\begin{proposition}
The group $\mthr(X)$ is a subgroup of $\bded(X)$.
\end{proposition}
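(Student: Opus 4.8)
The plan is to exploit the fact that $\bded(X)=\bdd(X)\cap\fnst(X)$ is itself a subgroup of $\full(X)$, being the intersection of the two subgroups $\bdd$ and $\fnst$ recalled in \secref{sec:bdd}. Since $\mthr=\langle A,B\rangle$, it therefore suffices to verify the ``direct verification'' referred to in the statement, namely that every element of the two finite generating groups $A$ and $B$ already lies in $\bded$; the subgroup they generate is then automatically contained in $\bded$.

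First I would treat $A$. Each $a\in A=\sym X$ is embedded as $a=\pair<1,\dots,1>a$, so all of its first-level states are trivial. Hence $a$ is finitary of finitary depth $1$ in the sense of \defref{def:depth}: its growth function $\Gamma_a$ vanishes for $n\ge1$. In particular $A\subseteq\ftry\subseteq\bded$, the last inclusion holding because a finitary automorphism is automatic (its state set is contained in $\{a,1\}$) and has eventually-zero, hence bounded, growth.

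Next I would treat $B$. Fix $b=(b_2,\dots,b_d)\sigma\in B$ with $b_i\in A$ and $\sigma\in\sym{\overline X}$; by the embedding of \defref{defn:mother} it satisfies the self-referential decomposition $b=\pair<b,b_2,\dots,b_d>\sigma$. Iterating this decomposition, the only states that arise are $b$ itself and the states of the $b_i$, so that $\S(b)\subseteq\{b,1,b_2,\dots,b_d\}$ is finite and $b$ is automatic. Moreover $b$ is directed of period $1$: taking $w_0=o$, the state $b_{w_0}=b$ reproduces $b$, while the remaining first-level states $b_2,\dots,b_d\in A$ are finitary. By \propref{pr:strbndd} such a directed automatic automorphism is bounded, whence $b\in\bded$ and $B\subseteq\bded$.

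Combining the two cases gives $A\cup B\subseteq\bded$, and since $\bded$ is a group we conclude $\mthr=\langle A,B\rangle\subseteq\bded$. The only point requiring genuine care is the self-referential definition of the elements of $B$: one must confirm that the recursion produces a bona fide directed automorphism with a finite state set, rather than one of exponential growth, which is precisely the directedness computation carried out above.
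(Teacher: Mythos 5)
Your proof is correct and takes essentially the same route as the paper: the paper's proof consists precisely of the remark that a direct verification shows $A,B\subseteq\bded$, after which $\mthr=\langle A,B\rangle\subseteq\bded$ follows because $\bded=\bdd\cap\fnst$ is a subgroup. Your write-up simply supplies that verification in full (finitarity of the embedded copy of $A$, and automaticity plus directedness of the embedded elements of $B$, combined with \propref{pr:strbndd}), which is exactly what the paper leaves to the reader.
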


\subsection{Embedding of finitely generated subgroups of $\bded$}

\begin{theorem}\label{thm:reduc}
Any finitely generated subgroup of $\bded(X)$ can be embedded as a subgroup
into the wreath product $\mthr(X^N)\wr\sym{X^N}$ for some integer $N$.
\end{theorem}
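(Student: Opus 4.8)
The plan is to take an arbitrary finitely generated subgroup $H=\langle g_1,\dots,g_k\rangle$ of $\bded(X)$ and, after passing to a sufficiently deep level $N$ of the tree, realize each generator as an element of the ``Mother group'' pattern recorded in \defref{defn:mother}. First I would invoke \propref{pr:strbndd} and \defref{def:depth}: each generator $g_i$ is either finitary of some finitary depth, or has all non-finitary level-$m_i$ states directed with period $l_i$. Taking $N$ large enough (at least the maximum of all the finitary depths, the bounded depths $m_i$, and the periods $l_i$, and then rounding up so that $N$ is a common multiple of the periods), I would re-view each $g_i$ through its decomposition at level $N$: this expresses $g_i$ as an element of $\full(X^N)\wr\sym{X^N}$ whose non-trivial states at the first level of the alphabet $X^N$ are either finitary automorphisms or \emph{directed} automorphisms that are self-reproducing with period exactly $N$ (a word $w_0\in (X^N)^1$ with $(g_i)_{w_0}=g_i$). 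The choice making $N$ a multiple of each period is what guarantees a state equal to the whole automorphism reappears after a single letter of the coarser alphabet $X^N$, matching the ``$b$ in the upper-left corner'' shape of $M^b$.

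Next I would match these two types of states to the two generating groups of $\mthr(X^N)$. The finitary generators of $H$ act on finitely many levels and then trivially; since $\sym{X^N}$ and the wreath iterates built from it already contain all the finitary automorphisms up to any fixed depth below $N$, each finitary $g_i$ lands in the subgroup generated by copies of $A=\sym{X^N}$ (more precisely, in the finite iterated wreath product that $\mthr(X^N)$ contains). For a directed generator, its level-$N$ decomposition has one recurring coordinate equal to $g_i$ itself and all other coordinates finitary; this is precisely the recursive shape $b\mapsto(b,b_2,\dots,b_d)\sigma$ defining the $B$-generator of the Mother group, where the $b_j$ and $\sigma$ record the finitary ``decorations'' and the permutation. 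So I would produce, for each directed $g_i$, an element of $B=\sym{\overline{X^N};\sym{X^N}}$ realizing the same recursion, using the freedom in $B$ to absorb the finitary side-states.

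The embedding itself I would construct coordinate by coordinate on the $|X^N|$ first-level subtrees: an element of $\mthr(X^N)\wr\sym{X^N}$ is a tuple of elements of $\mthr(X^N)$ together with a top permutation, and I would assign to each $g_i$ the tuple whose entries are the matched Mother-group elements dictated by the level-$N$ states, with the permutation being the level-$N$ action $\sigma_{g_i}\in\sym{X^N}$. Because the group operation in both $\full(X)$ (via the decomposition isomorphism \eqref{eq:dec} iterated to level $N$) and in $\mthr(X^N)\wr\sym{X^N}$ is governed by the same wreath-product multiplication rule, this assignment extends to a homomorphism on $H$; injectivity follows since the assignment is just a faithful re-encoding of the action on the tree, with no collapsing of the tree structure.

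The main obstacle will be the bookkeeping at the interface between the directed and finitary parts: I must check that the finitary side-states appearing in the level-$N$ decomposition of a directed generator can genuinely be realized \emph{inside} $B$, i.e.\ that the permutational wreath product $\sym{\overline{X^N}}\wr\sym{X^N}$ is large enough to carry all of them, and that the recursion closes up consistently when the period divides $N$. A subtler point is that different generators may have different depths and periods, so I must verify that one common $N$ works simultaneously for all of them and that the re-encoding of a finitary automorphism and of a directed automorphism are compatible under multiplication (so that products of generators still map into $\mthr(X^N)\wr\sym{X^N}$ rather than escaping it); this is essentially the assertion that the level-$N$ states of any element of $H$ are again of these two controlled types, which is exactly what boundedness via \propref{pr:strbndd} provides, but spelling out the closure under the group operations is where the care is needed.
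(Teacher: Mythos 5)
You follow the paper's opening moves (choose $N$ exceeding all depths and divisible by all periods, split the top levels off into the outer factor $\sym{X^N}$, and view the level-$N$ states over the coarse alphabet), but the step on which your whole embedding rests is false. You claim that a directed state, with decomposition $\alpha=\pair<\alpha'_x>_{x\in X^N}\sigma$ where $\alpha'_z=\alpha$ and all other entries are finitary, ``is precisely the recursive shape'' of a $B$-generator of $\mthr(X^N)$. In \defref{defn:mother}, however, an element of $B$ has its recurring entry at the distinguished letter $o$ \emph{and} its first-level permutation lies in $\sym{\overline{X^N}}$, i.e.\ fixes $o$: in matrix terms $M^b$ carries $b$ in the diagonal slot $(o,o)$. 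For a directed automorphism there is no control whatsoever on $\sigma(z)$; generically $\sigma(z)\neq z$, so the recurring entry of $M^\alpha$ sits at the \emph{off-diagonal} position $(z,\sigma(z))$. This already happens for the Basilica generator $a$ of \exref{ex:basilica}: $a_{xx}=a$ but $a(xx)=xy$, so over $X^2$ the first-level permutation moves the recurring letter, and no relabelling of the alphabet (conjugation by an element of $\sym{X^2}$) can repair this, since the condition ``the permutation fixes the letter carrying the recurring state'' is invariant under such conjugation. Consequently your ``re-encoding'' of a directed state as a $B$-element replaces it by a genuinely different tree automorphism, not induced by any isomorphism of the tree, and the coordinate-wise assignment built from such replacements is not a homomorphism: the identical wreath multiplication rules on the two sides yield a homomorphism only if the states themselves are transported by one fixed isomorphism, which is exactly what is missing.

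The idea you are lacking is the conjugation trick that is the core of the paper's proof: one conjugates, inside $\aut(T')$, by the automorphism $\delta=\pair<\delta\varsigma_x^{-1}>_{x\in X'}$ built from a transitive cycle $\varsigma\in\sym{X'}$. Because the states of $\delta$ are $\delta$ itself times elements of $A$, conjugation by $\delta$ transports self-recurrence correctly: by \eqref{eq:ad}, the element $\beta=\varsigma_z\alpha^\delta\varsigma^{-1}_{\sigma(z)}$ satisfies $\beta'_{o'}=\beta$ and its first-level permutation $\varsigma_z\sigma\varsigma^{-1}_{\sigma(z)}$ fixes $o'$; after one further enlargement of the alphabet (the paper's passage to $X''$, which turns the finitary side entries into elements of $A=\sym{X''}$ --- this also settles your worry about ``absorbing'' them, they land in $A$, not in $\sym{\overline{X''}}$), $\beta$ is literally a $B$-element, whence $\alpha^\delta=\varsigma_z^{-1}\beta\varsigma_{\sigma(z)}\in\mthr(X'')$. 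Since the embedding is then the level-$m$ decomposition composed with conjugation by the single fixed element $\delta$, it is automatically an injective homomorphism, and your ``closure under multiplication'' concern disappears. A secondary inaccuracy: your assertion that $\mthr(X^N)$ contains ``all the finitary automorphisms up to any fixed depth'' is unjustified and false in general --- for $|X|=2$ the Mother group is infinite dihedral and one checks that its only finitary elements are $1$ and $\sigma$; what is true, and what suffices, is that a finitary automorphism of depth at most $N$ over $X$ is itself an element of $A=\sym{X^N}$.
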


\begin{proof}
Let $G=\langle S\rangle$ be a finitely generated subgroup of $\bded$, and let
$\Pi$ be the automaton with alphabet $X$ and state space
$Q=\bigcup_{\alpha\in S}\S(\alpha)$ which is the union of the automata
associated with each automorphism $\alpha\in S$ (see the proof of
\propref{prop:equiv}). By boundedness, each $\S(\alpha)$ contains the identity
automorphism 1, so that $1\in Q$.

Let $F=\ftry\cap Q$ be the set of finitary elements of $Q$, let $m$ be an
integer greater than the depths of all the elements of $Q$ (see
\defref{def:depth}), and finally let $\ell$ be a common multiple of the periods
of directed automorphisms associated with non-finitary elements of $Q$.

\medskip

First we apply $m$ times decomposition \eqref{eq:dec} to the group $G$, i.e.,
embed it into the wreath product $H\wr\left(\wr^m\sym X\right)$, where
$\wr^m\sym X<\sym{X^m}$ is the automorphism group of the subtree consisting of
the first $m$ levels of the tree $T$ and $H$ is the group generated by all the
states $\alpha_w$ with $\alpha\in G$ and $w\in X^m$. Thus, $H=\langle
R\rangle$ for the subset $R=\setsuch{q_w}{q\in Q, w\in X^m}\subset Q$ of the
states of $\Pi$.

\medskip

We next replace $X$ by $X'=X^\ell$ and denote by $T'=(X')^*$ the associated
tree, which is obtained from the tree $T$ by retaining only the levels whose
numbers are multiples of $\ell$. Then $H$ is \emph{a fortiori} a group of
automatic automorphisms of $T'$. In that process, the automaton $\Pi$ is
replaced by an automaton $\Pi'$ with alphabet $X'$, but with the same state
space $Q$ as $\Pi$. Its output and transition functions are the restrictions
of the respective functions of the automaton $\Pi^*$ \eqref{eq:*}.

Let us fix a letter $o'\in X'$, a transitive cycle $\varsigma\in\sym{X'}$, and
for $x\in X'$ put $\varsigma_x=\varsigma^i$ for the unique $i\;
(\textrm{mod}\, |X'|)$ such that $x=\varsigma^i(o')$. We define an
automorphism $\delta\in\aut(T')$ via its decomposition \eqref{eq:dec} as
$\delta=\pair<\delta'_x>_{x\in X'}$ with $\delta'_x=\delta\varsigma_x^{-1}$.
In other words, the automorphism $\delta$ maps a word
$\varsigma^{i_1}(o')\varsigma^{i_2}(o')\varsigma^{i_3}(o')\ldots\varsigma^{i_n}(o')\in
T'$ to the word
\[\varsigma^{i_1}(o')\varsigma^{i_2-i_1}(o')\varsigma^{i_3-i_2}(o')\ldots
\varsigma^{i_n-i_{n-1}}(o') \;.\]

Then the $\delta$-conjugate of any automorphism
\begin{equation} \label{eq:alpha}
\alpha=\pair<\alpha'_x>_{x\in X'}\sigma\in\aut(T')
\end{equation}
is
\begin{equation} \label{eq:ad}
\alpha^\delta = \delta^{-1}\alpha\delta =
\pair<{\delta'_x}^{-1}\alpha'_x\delta'_{\sigma(x)}>_{x\in X'} \sigma =
\pair<\varsigma_x\delta^{-1}\alpha'_x\delta\varsigma^{-1}_{\sigma(x)}>_{x\in
X'} \sigma =
\pair<\varsigma_x{\alpha'_x}^\delta\varsigma^{-1}_{\sigma(x)}>_{x\in X'}
\sigma \;.
\end{equation}

By the choice of $\ell$, each $\alpha\in R$ either belongs to $F$ or else has
decomposition \eqref{eq:alpha} with the property that $\alpha'_z=\alpha$ for
precisely one letter $z=z(\alpha)\in X'$, and $\alpha'_x\in F$ whenever $x\neq
z$. In the latter case for
$\beta=\beta(\alpha)=\varsigma_z\alpha^\delta\varsigma^{-1}_{\sigma(z)}$ we
have $\beta=\pair<\beta'_x>\rho'$ with $\beta'_{o'}=\beta$, $\beta'_x\in\ftry$
for any $x\in X'\setminus\{o'\}$, and the permutation
$\rho'=\varsigma_z\sigma\varsigma^{-1}_{\sigma(z)}\in\sym{X'}$ satisfies
$\rho'(o')=o'$.

\medskip

Denote by $m'$ the maximal bounded depth of the automorphisms $\beta'_x$ from
the previous paragraph for all $x\in X'\setminus\{o'\}$ and $\alpha\in R$, and
finally enlarge once more the alphabet $X'$ to $X''=(X')^{m'}$ by putting
$o''=(o')^{m'}$. Then in the associated decomposition
$\beta=\pair<\beta''_x>_{x\in X''}\rho''$ with $\rho''\in\sym{X''}$ we have
$\rho''(o'')=o''$ and $\beta''_{o''}=\beta$. All the other automorphisms
$\beta''_x,\,x\in X''\setminus\{o''\}$ are finitary of depth at most $m'$ with
respect to the alphabet $X'$. Consequently they are finitary of depth at most
1 with respect to the alphabet $X''$, i.e., they belong to $\sym{X''}$.
Therefore, $\beta\in\mthr(X'')$. Since the auxiliary element $\varsigma$ also
belongs to $\mthr(X'')$, we conclude that the $\delta$-conjugate
$\alpha^\delta$ belongs to $\mthr(X'')$, so that the $\delta$-conjugate of the
whole group $H=\langle R\rangle$ is a subgroup of $\mthr(X'')$.
\end{proof}

\subsection{Amenability of the group $\bded$}

\thmref{thm:reduc} allows us to reduce the question about the amenability of
the groups $\bded(X)$ to the one about the amenability of the groups
$\mthr(X)\subset\bded(X)$ from \defref{defn:mother}. Further developing the
ideas from \cite{Bartholdi-Virag05} and~\cite{Kaimanovich05} we shall prove in
\secref{sec:amen mother}

\begin{theorem}\label{thm:amen}
For any finite alphabet $X$ the associated Mother group $\mthr=\mthr(X)$ is
amenable.
\end{theorem}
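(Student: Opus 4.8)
To prove \thmref{thm:amen}, the plan is to go through the entropy criterion for random walks: by the general theory of \cite{Kaimanovich-Vershik83} it suffices to exhibit a symmetric, finitely supported probability measure $\mu$ on $\mthr=\mthr(X)$ whose support generates $\mthr$ and whose asymptotic entropy $h(\mu)=\lim_n H(\mu^{*n})/n$ vanishes, since vanishing of $h$ forces the Poisson boundary to be trivial, which in turn yields Reiter's condition and hence amenability. The whole problem thus reduces to proving that the sequence $H(n):=H(\mu^{*n})$ grows \emph{sublinearly}, and this is exactly where the self-similarity built into \defref{defn:mother} should be exploited.

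First I would choose $\mu$ adapted to the recursion, symmetric and supported on a generating set meeting both finite groups $A$ and $B$, and arrange $\beta:=\mu(B)<1$. Running the $\mu$-random walk $X_n=s_1\cdots s_n$ and applying the decomposition \eqref{eq:dec} encodes $X_n$ bijectively by its first-level states and top permutation, $X_n\leftrightarrow\Bigl(\bigl((X_n)_x\bigr)_{x\in X},\Sigma_n\Bigr)$ with $(X_n)_x\in\mthr$ and $\Sigma_n\in\sym X$. The point of the Mother group is that a generator of $A$ acts only on the first level (it maps to $\pair<1,\dots,1>a$, so it contributes \emph{trivial} states), whereas a generator of $B$ feeds its full directed part into exactly one subtree --- the one whose position walk currently sits at the marked letter $o$ --- and only finitary decorations into the others. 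Hence, writing $N_x$ for the number of directed contributions routed into the $x$-section and $K_n=\#\{j:s_j\in B\}$ for the number of active steps, each active step contributes one directed letter to exactly one section, so $\sum_{x\in X}N_x=K_n$ with $\mathbb E K_n=\beta n$.

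The decisive analytic input is a self-similarity statement for the walk: conditionally on the permutational routing, the sections are conditionally independent and the $x$-section $(X_n)_x$ is distributed as the $\mu$-random walk run for $N_x$ steps, the finitary decorations accumulated between consecutive directed contributions being absorbed into, and shown compatible with, the increment law $\mu$. Granting this, subadditivity of entropy applied to the encoding gives $H(n)\le H(\Sigma_n)+\sum_{x\in X}H\bigl((X_n)_x\bigr)$, where $H(\Sigma_n)\le\log|\sym X|$ is bounded; conditioning each section on its length yields $H\bigl((X_n)_x\bigr)\le\mathbb E\,[H(\mu^{*N_x})]+O(\log n)$; and since $k\mapsto H(\mu^{*k})$ is concave while $\sum_{x}\mathbb E N_x=\beta n$, two applications of Jensen's inequality give $\sum_{x}\mathbb E\,[H(\mu^{*N_x})]\le d\,H\bigl(\mu^{*\lfloor\beta n/d\rfloor}\bigr)$, so that
\[
H(n)\le d\,H\bigl(\mu^{*\lfloor\beta n/d\rfloor}\bigr)+O(\log n).
\]
Dividing by $n$ and letting $n\to\infty$ (the limit $h$ exists by Fekete, as $H$ is subadditive) produces the self-referential bound $h\le\beta h$; because $\beta<1$ and $h\ge0$ this forces $h=0$, and amenability follows.

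The step I expect to be the main obstacle is precisely the self-similarity of the walk together with the convolution-entropy inequalities it feeds into: showing rigorously that, after conditioning on the routing, the sections are genuine conditionally independent $\mu$-walks whose increment law is controlled by $\mu$ despite the interleaved finitary decorations, and verifying that the error terms $H(\Sigma_n)$ and the lengths $N_x$ really contribute only at the $O(\log n)$ scale. This bookkeeping on entropies of convolutions is what I would relegate to a separate appendix, building on the techniques of \cite{Bartholdi-Virag05} and \cite{Kaimanovich05}.
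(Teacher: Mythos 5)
Your overall architecture matches the paper's: the entropy criterion of \cite{Kaimanovich-Vershik83} (\thmref{thm:entr}) combined with a self-similar contraction inequality forcing $h\le\beta h$ with $\beta<1$ (the ``M\"unchhausen trick''). However, there is a genuine gap, and it sits exactly at the step you yourself flag as ``the main obstacle'': the claim that, conditionally on the routing, the $x$-section of the walk is distributed as the $\mu$-walk run for $N_x$ steps. For a general symmetric finitely supported generating $\mu$ this is false, not merely unproven. Trace what the increments of the section walk actually are: an $A$-step contributes the identity to every section; a $B$-step contributes the element of $B$ itself to the single routed section, and contributes \emph{finitary decorations} --- the $A$-valued coordinates $b_2,\dots,b_d$ of the increment --- to all the other sections. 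So the increment law of the section walk is a mixture of $\delta_e$, of a conditioned version of $\mu$ restricted to $B$, and of the law of the $A$-coordinates of $B$-increments; this mixture is not $\mu$, so the sections are not $\mu$-walks, and your inequality $H\bigl((X_n)_x\bigr)\le\mathbb E\,[H(\mu^{*N_x})]+O(\log n)$ --- hence the contraction $h\le\beta h$ --- does not follow. (Your conditional-independence claim is also false in general, but harmless: entropy subadditivity, i.e.\ Scholium~\ref{sch:A0}, is all that is needed there, and the paper explicitly notes that the projections to $\mthr$ and to $X$ are \emph{not} independent.)

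What is needed to close the gap is a device that compares the section walk's law back to the powers of $\mu$, and this is where the paper's specific choice of measure does real work. The paper takes $\mu=\mu_A\mu_B$ with $\mu_A,\mu_B$ \emph{uniform} on the finite subgroups $A,B$ of \defref{defn:mother}. Uniformity has two consequences. First, the matrix $M^\mu$ has identical rows, so the section (row) walk is exactly the random walk of $\wt\mu=\frac{d-1}{d}\mu_A+\frac1d\mu_B$, with no dependence on the routing at all; this replaces your unproved self-similarity claim. Second, $\mu_A$ and $\mu_B$ are idempotent under convolution, so $\wt\mu^n$ decomposes as a convex combination \eqref{eq:Hmu^n} of \emph{alternating} products $\mu_A\mu_B\mu_A\cdots$, whose entropies are bounded by $F(\lfloor i/2\rfloor+1)$ via \eqref{eq:conv}; counting switch times and applying Chebyshev together with \eqref{eq:entrineq} then yields the reverse comparison \eqref{eq:Htil}, $\wt F(n)\le F\bigl(\lfloor(\frac{d-1}{d^2}+\epsilon)n\rfloor\bigr)+\log(2n)$. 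Combined with $F(n)\le d\,\wt F(n)+d\log d$ \eqref{eq:first}, this gives $h(\mthr,\mu)\le\frac{d-1}{d}\,h(\mthr,\mu)$, hence $h=0$ and amenability. In short: the bookkeeping you propose to relegate to an appendix is not routine bookkeeping; for an arbitrary $\mu$ the key claim fails, and the paper's proof succeeds precisely because the idempotence of the uniform measures absorbs the finitary decorations that your sketch cannot.
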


\begin{cor}[= Theorem~\ref{thm:main}]
The group $\bded(X)$ is amenable.
\end{cor}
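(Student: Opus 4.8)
The plan is to read off the corollary from the two preceding theorems together with the standard permanence properties of amenability recalled in the Introduction (closure under subgroups, quotients, extensions and inductive limits). The crucial structural fact is that amenability is a \emph{local} property: every countable group is the directed union of its finitely generated subgroups, and since the class of amenable groups is closed under inductive limits, a countable group is amenable as soon as all of its finitely generated subgroups are. The group $\bded(X)$ is countable, so it suffices to prove that every finitely generated subgroup $G$ of $\bded(X)$ is amenable.

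Given such a $G$, I would invoke \thmref{thm:reduc} to embed it, for a suitable integer $N$, as a subgroup of the wreath product $\mthr(X^N)\wr\sym{X^N}$. Because amenability passes to subgroups, the whole problem is then reduced to establishing amenability of this single wreath product.

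For the latter I would unwind the wreath product as the semidirect product $\mthr(X^N)\wr\sym{X^N}=\mthr(X^N)^{X^N}\rtimes\sym{X^N}$ and apply the permanence properties step by step. By \thmref{thm:amen} the Mother group $\mthr(X^N)$ is amenable. Since $X^N$ is a finite set, the base group $\mthr(X^N)^{X^N}$ is a finite direct power of an amenable group, hence amenable (finite direct products being a special case of extensions). Finally $\sym{X^N}$ is a finite group, hence amenable, and $\mthr(X^N)\wr\sym{X^N}$ is an extension of the amenable base group by $\sym{X^N}$; closure of amenability under extensions therefore gives amenability of the wreath product. Combining this with the previous paragraph shows that every finitely generated $G\subset\bded(X)$ is amenable, and the locality argument then yields amenability of $\bded(X)$ itself.

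I do not expect any genuine obstacle in this deduction: once the two main theorems are in hand it is entirely formal, and all the real content lies elsewhere—in the entropy and random-walk analysis underlying \thmref{thm:amen} and in the combinatorial embedding of \thmref{thm:reduc}. The only point in the deduction itself that warrants a word of care is the initial reduction to finitely generated subgroups, which relies on countability of $\bded(X)$ and on stability of amenability under directed unions; the subsequent wreath-product bookkeeping is nothing more than repeated application of the von Neumann permanence operations.
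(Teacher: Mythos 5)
Your proof is correct and follows exactly the paper's own argument: reduce to finitely generated subgroups by locality of amenability, embed such a subgroup into $\mthr(X^N)\wr\sym{X^N}$ via Theorem~\ref{thm:reduc}, and conclude via Theorem~\ref{thm:amen} together with the permanence properties of amenability. You merely spell out the wreath-product bookkeeping (finite direct power, extension by a finite group) that the paper leaves implicit.
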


\begin{proof}
To show that $\bded(X)$ is amenable, it suffices to show that all its finitely
generated subgroups are amenable. Now by \thmref{thm:reduc} such a subgroup
embeds, for a certain integer $N$, in $\mthr(X^N)\wr\sym{X^N}$, which is
amenable because $\mthr(X^N)$ is amenable.
\end{proof}

\section{Amenability of the Mother group} \label{sec:amen mother}

\subsection{Random walks on self-similar groups} \label{sec:ss}

Let $G\subset\full=\full(X)$ be a countable \emph{self-similar group}, i.e.,
such that for any $g\in G$ all the elements $g_x$ from the
decomposition $g=\pair<g_x>\sigma_g$ belong to $G$. We then have an embedding
(not an isomorphism, generally speaking!) $G\to G\wr\sym X$. In matrix terms
it becomes an embedding $g\mapsto M^g$ of the group $G$ into the group of
generalized permutation matrices $\sym {X;G}$, see \secref{sec:gen perm}. The
latter embedding extends by linearity to an algebra homomorphism
\begin{equation} \label{eq:Mm}
\mu\mapsto M^\mu=\sum\mu(g) M^g
\end{equation}
of the Banach algebra $\ell^1(G)$ into $\M_{|X|}(\ell^1(G))$.

The correspondence $\mu\mapsto M^\mu$ has a natural interpretation in terms of
\emph{random walks} on $G$, see \cite{Kaimanovich05}. Let $\mu$ be a
probability measure on $G$; then the associated random walk $(G,\mu)$ is the
Markov chain with transition probabilities $p(g,gh)=\mu(h)$, which we denote
as
$$
g \mapstoto_{h\sim\mu} gh \;.
$$
By applying the embedding $g\mapsto M^g$, it gives rise to the random walk on
the group $\sym {X;G}$ with transition probabilities
$$
M \mapstoto_{h\sim\mu} M M^h \;.
$$
Further, each of the rows of matrices from $\sym{X;G}$ performs a Markov chain
with transition probabilities
\begin{equation} \label{eq:rwidf}
R \mapstoto_{h\sim\mu} R M^h \;.
\end{equation}
Due to the definition of the group $\sym{X;G}$ the rows of the corresponding
matrices can be identified with points of the product space $G\times X$ (each
row has precisely one non-zero entry, so that it is completely described by
the value of this entry and by its position). Therefore, the latter Markov
chain can be interpreted as a Markov chain on $G\times X$ whose transition
probabilities are easily seen to be invariant with respect to the left action
of $G$ on $G\times X$. Such Markov chains are called random walks on $G$ with
\emph{internal degrees of freedom} (parameterized by $X$), for short RWIDF.
Random walks with internal degrees of freedom are described by order $|X|$
matrices $M=(M_{xy})_{x,y\in X}$ whose entries $M_{xy}$ are subprobability
measures on $G$ such that $\sum_y \|M_{xy}\|=1$ for any $x\in X$ (here
$\|\mu\|$ denotes the mass of a measure $\mu$), so that the transition
probabilities are then
\begin{equation} \label{eq:trans}
p\bigl((g,x),(gh,y)\bigr) = M_{xy}(h) \;.
\end{equation}
The projection of the RWIDF governed by $M$ to the space of degrees of
freedom $X$ is the Markov chain with transition probabilities
$p(x,y)=\|M_{xy}\|$.

Now, the interpretation promised at the beginning of this paragraph is that
the matrix describing the RWIDF \eqref{eq:rwidf} is precisely the matrix
$M^\mu$ from \eqref{eq:Mm}.

\subsection{Random walks and amenability}

The use of random walks for proving amenability of a self-similar group $G$ is
based on an idea which first appeared in \cite{Bartholdi-Virag05} and was
further developed in \cite{Kaimanovich05}.

It is well-known that amenability of a countable group $G$ is equivalent to
existence of a probability measure $\mu$ on $G$ such that it is
\emph{non-degenerate} (in the sense that its support generates $G$ as a group)
and the \emph{Poisson boundary} of the associated random walk $(G,\mu)$ is
trivial. In addition, if the measure $\mu$ has finite entropy $H(\mu)$, then
there is a quantitative criterion of triviality of the Poisson boundary: it is
equivalent to vanishing of the \emph{asymptotic entropy} $h(G,\mu)=\lim
H(\mu^n)/n$, where $\mu^n$ denotes the $n$-fold convolution of the measure
$\mu$, see \cite{Kaimanovich-Vershik83}. Thus,

\begin{theorem}[\cite{Kaimanovich-Vershik83}] \label{thm:entr}
If a countable group $G$ carries a non-degenerate probability measure $\mu$
with $h(G,\mu)=0$ then $G$ is amenable.
\end{theorem}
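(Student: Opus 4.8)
The plan is to verify \emph{Reiter's condition}: I will show that the convolution powers $\mu^n$ themselves (possibly along a subsequence) form a sequence of approximately left-invariant probability measures, i.e.\ $\|g\cdot\mu^n-\mu^n\|_1\to0$ for every $g\in G$, where $g\cdot\mu^n$ denotes the left translate. Since $\mathrm{supp}(\mu)$ generates $G$, existence of such a sequence is equivalent to amenability, so this suffices. The whole point is to convert the analytic hypothesis $h(G,\mu)=\lim_n H(\mu^n)/n=0$ into this geometric near-invariance.

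The bridge between the two is an exact identity for the entropy increments. Writing $\mu^{n+1}=\mu*\mu^n=\sum_{h}\mu(h)\,(h\cdot\mu^n)$, I view $\mu^{n+1}$ as a mixture of the translates $h\cdot\mu^n$ with weights $\mu(h)$. The standard mixture decomposition of entropy, together with translation invariance $H(h\cdot\mu^n)=H(\mu^n)$, then yields
\begin{equation}
H(\mu^{n+1})-H(\mu^n)=\sum_{h}\mu(h)\,D\!\left(h\cdot\mu^n\,\middle\|\,\mu^{n+1}\right),
\end{equation}
where $D(p\|q)=\sum_g p(g)\log\bigl(p(g)/q(g)\bigr)$ is the relative entropy. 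Each relative entropy is finite (bounded by $\log(1/\mu(h))$, since $\mu^{n+1}\ge\mu(h)\,h\cdot\mu^n$ pointwise) and non-negative, so the increment $a_n:=H(\mu^{n+1})-H(\mu^n)\ge0$ controls, for each $h\in\mathrm{supp}(\mu)$, how far the translate $h\cdot\mu^n$ is from $\mu^{n+1}$.

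Next I would force these increments to vanish. By subadditivity $H(\mu^{m+n})\le H(\mu^m)+H(\mu^n)$ and Fekete's lemma, $H(\mu^n)/n\to h=0$, so the partial sums $\sum_{k<n}a_k=H(\mu^n)-H(\mu)$ are $o(n)$; as the $a_k\ge0$, indices along which $a_n\to0$ exist (and in fact $a_n\downarrow0$ if one uses the classical concavity of $n\mapsto H(\mu^n)$). Along such indices the identity gives $D(h\cdot\mu^n\|\mu^{n+1})\to0$ for every $h\in\mathrm{supp}(\mu)$, and Pinsker's inequality $\|p-q\|_1^2\le2D(p\|q)$ upgrades this to $\|h\cdot\mu^n-\mu^{n+1}\|_1\to0$. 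Comparing two elements $h_1,h_2\in\mathrm{supp}(\mu)$ through $\mu^{n+1}$ and applying translation invariance of the norm gives $\|(h_2^{-1}h_1)\cdot\mu^n-\mu^n\|_1\to0$, so near-invariance holds for every $g$ in the symmetric generating set $(\mathrm{supp}\,\mu)^{-1}(\mathrm{supp}\,\mu)$. A triangle-inequality induction on word length then propagates it, using non-degeneracy, to all of $G$, establishing Reiter's condition.

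The main obstacle is the passage from the \emph{asymptotic} (Ces\`aro) statement $H(\mu^n)/n\to0$ to the \emph{per-step} vanishing $a_n\to0$: a priori the increments could oscillate. This is exactly where I must invoke the monotonicity $a_{n+1}\le a_n$ (equivalently, concavity of $n\mapsto H(\mu^n)$), a genuine input from the entropy theory of random walks; alternatively one settles for a density-one subsequence via the Ces\`aro argument, which is all that Reiter's criterion requires. The remaining care points are routine: checking finiteness of the relative entropies so that the identity and Pinsker's inequality apply, and noting $H(\mu)<\infty$ (implicit in the very definition of $h(G,\mu)$) so that all the $H(\mu^n)$ are finite.
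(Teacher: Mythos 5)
Your entropy-increment identity and its immediate consequences are sound: with $H(\mu)<\infty$ one indeed has $H(\mu^{n+1})-H(\mu^n)=\sum_h\mu(h)\,D\bigl(h\mu^n\,\|\,\mu^{n+1}\bigr)$, the increments vanish (along a density-one subsequence by your Ces\`aro argument, or monotonically via the data-processing inequality), and Pinsker's inequality then gives $\|h\mu^n-\mu^{n+1}\|_1\to0$ for each $h\in\operatorname{supp}\mu$. Note that the paper offers no proof to compare with --- Theorem~\ref{thm:entr} is quoted from \cite{Kaimanovich-Vershik83} --- so your argument must stand on its own, and it does not, because of the final propagation step. The set $S^{-1}S$, with $S=\operatorname{supp}\mu$, is \emph{not} in general a generating set of $G$, even when $S$ is: non-degeneracy says $\langle S\rangle=G$, but your comparison through $\mu^{n+1}$ only yields approximate invariance under the subgroup $\langle S^{-1}S\rangle$, which can be proper (this is the periodicity of the walk), and a triangle-inequality induction on word length never leaves that subgroup.

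The gap is genuine in the strongest sense: the intermediate statement you aim for is false. For the simple random walk $\mu=\frac12(\delta_1+\delta_{-1})$ on $G=\Z$ one has $h(G,\mu)=0$ and $\mu$ non-degenerate, yet $\|\delta_1*\mu^n-\mu^n\|_1=2$ for \emph{every} $n$, because $\delta_1*\mu^n$ and $\mu^n$ are supported on integers of opposite parity; thus no subsequence of $(\mu^n)$ satisfies Reiter's condition, while your argument correctly detects invariance only under $\langle S^{-1}S\rangle=2\Z$. The standard repair is to lazify: run the entire argument on $\mu'=\frac12(\delta_e+\mu)$ instead. Then $e\in\operatorname{supp}\mu'$, so $(\operatorname{supp}\mu')^{-1}(\operatorname{supp}\mu')\supset S\cup S^{-1}$ generates $G$, and $h(G,\mu')=0$ follows from $h(G,\mu)=0$ by the mixture bound of Lemma~\ref{lem:A1}:
\[
H\bigl((\mu')^n\bigr)\le 2^{-n}\sum_{k=0}^n\binom{n}{k} H(\mu^k)+\log(n+1)=o(n)\;,
\]
since $H(\mu^k)=o(k)$ and $H(\mu^k)\le kH(\mu)$. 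With this single modification (and using your density-one subsequence, so that the concavity of $n\mapsto H(\mu^n)$, which you quote but do not prove, is never needed) the proof is complete. It is no accident that the paper's own application is immune to the issue: there $\mu=\mu_A\mu_B$ satisfies $\mu(e)>0$, which is exactly the aperiodicity your argument silently assumed.
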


If the group $G$ is self-similar, then, as it was explained in \secref{sec:ss}
above, any random walk $(G,\mu)$ gives rise to a RWIDF $(G\times X,M^\mu)$.
In \cite{Bartholdi-Virag05} and \cite{Kaimanovich05} one passed then from the
RWIDF $(G\times X,M^\mu)$ to a new random walk $(G,\mu')$ by taking the
\emph{trace} of the RWIDF $(G\times X,M^\mu)$ on a single ``layer''
$G\times\{x_0\}\subset G\times X$ for an appropriately chosen letter $x_0\in
X$. The asymptotic entropy does not decrease under this passage: $h(G,\mu)\le
h(G,\mu')$. Therefore, if the measure $\mu$ is \emph{self-similar} in the
sense that $\mu'=\alpha\mu +(1-\alpha)\delta_e$ for a certain real $\alpha<1$
(here $\delta_e$ denotes the unit mass at the group identity), then
$h(G,\mu)\le\alpha h(G,\mu)$, so that the asymptotic entropy must vanish
(\emph{M\"unchhausen trick}) proving amenability of the group $G$.

In the present paper we take a different approach based on the fact that the
Mother group $\mthr$ is generated by two finite subgroups $A$ and $B$. We take
as measure $\mu$ the convolution product of the uniform measures $\mu_A$ and
$\mu_B$ on these subgroups. Then the matrix $M^\mu$ has a very special form, so
that the projection of the associated RWIDF $(\mthr\times X, M^\mu)$ to
$\mthr$ is just the random walk $(\mthr,\wt\mu)$ determined by a new measure
$\wt\mu$. The measure $\wt\mu$ is a convex combination of the idempotent
measures $\mu_A$ and $\mu_B$, so that its convolution powers are essentially
convex combinations of the convolution powers of $\mu$. We then compare the
asymptotic entropies of $\mu$ and $\wt\mu$ and use the M\"unchhausen trick in
order to deduce vanishing of the asymptotic entropy $h(\mthr,\mu)$ and to
apply \thmref{thm:entr}. Actually, we make this argument more explicit in
order to obtain a lower estimate for the return profile of $\mthr$.

\subsection{Proof of \thmref{thm:amen}}

Let us consider on $\mthr$ the probability measure
\begin{equation} \label{eq:mu}
\mu = \mu_A \mu_B \;,
\end{equation}
where $\mu_A$ and $\mu_B$ are the uniform measures on the finite subgroups $A$ and
$B$ from \defref{defn:mother}, respectively. Then the associated matrix
$M^\mu$ is
$$
M^\mu = M^{\mu_A} M^{\mu_B} = E_d \begin{pmatrix} \mu_B & 0 \\ 0 & \mu_A E_{d-1}
\end{pmatrix} \;,
$$
where $d=|X|$, and $E_d$ denotes the order $d$ matrix with entries $1/d$, so
that $M^\mu$ has identical rows with entries
$$
M^\mu_{xy} =
  \begin{cases}
    \mu_B/d  &  \text{if $y=o$} \;, \\
    \mu_A/d  & \text{otherwise} \;.
  \end{cases}
$$
It means that transition probabilities \eqref{eq:trans} of the associated
RWIDF $(\mthr\times X, M^\mu)$ do not depend on $x$, so that its projection to
$\mthr$ is just the random walk $(\mthr,\wt\mu)$ determined by the measure
$$
\wt\mu = \sum_y M^\mu_{xy} = \frac{d-1}{d} \mu_A + \frac{1}{d} \mu_B \;,
$$
whereas the projection of RWIDF $(\mthr\times X, M^\mu)$ to $X$ is the
sequence of independent $X$-valued random variables with uniform distribution
on $X$ (because all entries $M^\mu_{xy}$ have mass $1/d$). Note that these two
projections are \emph{not} independent.

Let us now compare the entropies
$$
F(n) = H(\mu^n) \;,\qquad \wt F(n) = H(\wt\mu^n)
$$
of convolution powers of the measures $\mu$ and $\wt\mu$, respectively.

First suppose that we start the RWIDF $(\mthr\times X, M^\mu)$ at time $0$
from a point $(g,x)\in\mthr\times X$. Then its time $n$ distribution is
$R(M^\mu)^n$, where $R$ denotes the vector
$(0,\dots,\delta_g,\dots,0)\in\ell_1(G)^X$ with $\delta_g$ at position $x$. By
Scholium~\ref{sch:A0}, the entropy of this distribution does not exceed the
sum of the entropies of its projections to $\mthr$ and to $X$. The projection
of $R(M^\mu)^n$ to $X$ is uniform, so its entropy is $\log d$, whereas its
projection to $\mthr$ is $\wt\mu^n$. Therefore, the entropy of the row
distribution $R(M^\mu)^n$ is at most $\wt F(n) + \log d$.

Now, the measure $\mu^n$ is the time $n$ distribution of the random walk
$(\mthr,\mu)$. As it was explained in \secref{sec:ss}, this distribution can
be identified with the time $n$ distribution of the corresponding random walk
on the group $\sym{X;\mthr}$. Again by Scholium~\ref{sch:A0}, the entropy of
the latter distribution of random matrices is at most the sum of the entropies
of all the row distributions of these matrices. The distribution of the row
parameterized by $x\in X$ is precisely $R(M^\mu)^n$ for the vector
$R=(0,\dots,\delta_e,\dots,0)\in\ell_1(G)^X$ with $\delta_e$ at position $x$;
so we have the inequality
\begin{equation} \label{eq:first}
F(n) \le d \cdot [ \wt F(n) + \log d ] = d \wt F(n) + d\log d \;.
\end{equation}
Here we interpreted the RWIDF $(\mthr\times X, M^\mu)$ as a ``row
chain'' \eqref{eq:rwidf} and used the fact that the amount of
information about a random matrix does not exceed the sum of amounts
of information about its rows.

Our next step will be to obtain a bound in the opposite direction which will
ultimately lead to vanishing of the asymptotic entropy $h(\mthr,\mu)=\lim
F(n)/n$. Since the measures $\mu_A,\mu_B$ are idempotent, the convolution power
\begin{equation} \label{eq:Hmu^n}
\wt\mu^n = \left( \frac{d-1}{d} \mu_A + \frac{1}{d} \mu_B \right)^n = \sum_{i=1}^n
p_{A,i} \mu_{A,i} + \sum_{i=1}^n p_{B,i} \mu_{B,i}
\end{equation}
is a convex combination of the alternating convolution products
$\mu_{A,i}=\mu_A\mu_B\dots$ (respectively $\mu_{B, i}=\mu_B\mu_A\dots$) of
length $i\le n$ of the measures $\mu_A$
and $\mu_B$. The probability distribution $(p_{A, i}, p_{B,
i})$ admits a simple interpretation in terms of the sequence of Bernoulli
random variables $(\xi_k)$ with distribution
$$
\P\{\xi_i=A\}=\frac{d-1}d \;, \quad \P\{\xi_i=B\}=\frac1d \;.
$$
Namely, $p_{A,i}$ (respectively $p_{B,i}$) is the probability that $\xi_1=A$ (respectively
$\xi_1=B$) and the sequence $\xi_1,\xi_2,\dots,\xi_n$ contains precisely $i$
series consisting of repetitions of the same symbol (or, equivalently, that
there are precisely $i-1$ \emph{switch times} $t$ such that
$\xi_t\neq\xi_{t+1}$ with $1\le t\le n-1$). Clearly, the probability that any
given $t$ is a switch time is $\ell=2(d-1)/d^2$, whence the expectation of the
amalgamated distribution $p_i = p_{A, i}+p_{B, i}$ is $(n-1)\ell+1$. By using
\eqref{eq:conv} it is easy to see that
$$
H(\mu_{A, i}), H(\mu_{B, i}) \le F( \lfloor i/2\rfloor + 1) \qquad \text{for
all}\; i\in\{1, \ldots, n\},
$$
where $\lfloor\cdot\rfloor$ denotes the integer part (for example, if $i$ is
even then $H(\mu_{B,i})\le H(\mu_A \mu_{B,i}\mu_B) = H(\mu_{A,i+2}) = F(i/2+1)$). Then
from \eqref{eq:Hmu^n} and \eqref{eq:entrineq} we get
$$
\wt F(n) \le \sum p_i F(\lfloor i/2\rfloor+1) + \log(2n) \;.
$$
By applying the Chebyshev inequality to the distribution $p$ (one can check
directly that its variance is linear as a function of $n$) and using the fact
that the function $F$ is monotone and subadditive (so that its values for all
integers up to $n/2$ are controlled from above just by its value at
$\left\lfloor\textstyle\frac{d-1}{d^2}n\right\rfloor$), we obtain that for any
$\epsilon>0$ and all sufficiently large $n$
\begin{equation} \label{eq:Htil}
\wt F(n) \le
F\left(\left\lfloor\left(\textstyle\frac{d-1}{d^2}+\epsilon\right)n\right\rfloor\right)
+ \log(2n).
\end{equation}
Inequalities \eqref{eq:first} and \eqref{eq:Htil} imply, after dividing by $n$
and passing to the limit, the corresponding inequalities for the asymptotic
entropies of the measures $\mu$ and $\wt\mu$:
$$
h(\mthr,\mu) \le d\, h(\mthr,\wt\mu) \;,\qquad h(\mthr,\wt\mu) \le
\textstyle\frac{d-1}{d^2}\, h(\mthr,\mu) \;,
$$
whence $h(\mthr,\mu)\le\frac{d-1}{d}h(\mthr,\mu)$, so that $h(\mthr,\mu)=0$,
and the group $\mthr$ is amenable by \thmref{thm:entr}.

\begin{rem}
Triviality of the Poisson boundary of the measure $\mu$ \eqref{eq:mu} implies
that the convolution powers $\mu^n$ satisfy the \emph{Reiter condition} of
strong convergence to left-invariance, i.e., $\|g\mu^n-\mu^n\|\to 0$ for any
$g\in\mthr$ \cite{Kaimanovich-Vershik83}. Moreover, the reflected measure
$\ch\mu=\mu_B \mu_A$ (defined by $\ch\mu(g)=\mu(g^{-1})$) has the same asymptotic
entropy as $\mu$, so that $h(\mthr,\ch\mu)$ also vanishes, and the convolution
powers $\ch\mu^n$ also satisfy the Reiter condition. This fact easily implies
that for any probability measure $\mu'$ (other than convex combinations of
$\mu_A$ or $\mu_B$ with $\delta_e$) from the $\ell^1$-closure of the algebra
generated by the measures $\mu_A$ and $\mu_B$ its convolution powers satisfy the
Reiter condition, and therefore the Poisson boundary of $\mu'$ is trivial. Are
there any measures on $\mthr$ with a non-trivial Poisson boundary?
\end{rem}

\subsection{Explicit estimates}

Inequalities \eqref{eq:first} and \eqref{eq:Htil} imply that for any
$\varepsilon>0$ the sequence of entropies $\wt F(n)$ of the \emph{symmetric}
measure $\wt\mu$ satisfies the inequality
\begin{equation} \label{eq:phi}
\wt F(n) \le d\, \wt
F\left(\left\lfloor\left(\textstyle\frac{d-1}{d^2}+\epsilon\right)n\right\rfloor\right)
+d\log d + \log(2n)
\end{equation}
for all sufficiently large $n$. Roughly speaking, the multiplication of the
argument by $\textstyle\frac{d^2}{d-1}>d$ leads to the multiplication of the
value of $\wt F$ by at most $d$.

We shall consider the partial order $\prq$ on the set of positive functions on
$\mathbb R_+$ defined by $f_1\prq f_2$ if $f_1(t) \le Cf_2(at)$ for certain
constants $a,C>0$, and say that two functions $f_1,f_2$ are \emph{equivalent}
(written $f_1\sim f_2$) if $f_1\prq f_2$ and $f_2\prq f_1$. Inequality
\eqref{eq:phi} implies then

\begin{proposition} \label{pr:entr}
For any $\varepsilon>0$
$$
\wt F(n)\prq n^{\alpha+\varepsilon} \;,
$$
where
\begin{equation} \label{eq:al}
\alpha=\frac{\log d}{\log \textstyle\frac{d^2}{d-1}} < 1 \;.
\end{equation}
\end{proposition}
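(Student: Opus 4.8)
The plan is to read inequality \eqref{eq:phi} as a divide-and-conquer recursion for $\wt F$ and to read off the power growth from the ratio of the two constants that appear in it. Abbreviate $c=c_\epsilon=\frac{d-1}{d^2}+\epsilon$ and $g(n)=d\log d+\log(2n)$, so that \eqref{eq:phi} becomes
$$
\wt F(n)\le d\,\wt F\bigl(\lfloor cn\rfloor\bigr)+g(n)\qquad\text{for all }n\ge n_0(\epsilon),
$$
where $c<1$, the forcing term satisfies $g(n)=O(\log n)$, and $\wt F\ge 0$ is finite. Unrolling this recursion roughly $\log n/\log(1/c)$ times suggests a homogeneous growth of $d^{\,\log n/\log(1/c)}=n^{\alpha_\epsilon}$, where $\alpha_\epsilon$ is the critical exponent balancing the homogeneous part, i.e. the solution of $d\,c^{\alpha_\epsilon}=1$, namely $\alpha_\epsilon=\frac{\log d}{\log(1/c)}$. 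Since $1/c\to\frac{d^2}{d-1}$ as $\epsilon\to0^+$, we have $\alpha_\epsilon\downarrow\alpha$ with $\alpha$ as in \eqref{eq:al}; hence for any prescribed $\varepsilon>0$ we may fix $\epsilon>0$ so small that $\alpha_\epsilon<\alpha+\varepsilon$.

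I would then establish the bound by induction on $n$ with the ansatz $\wt F(n)\le K n^\beta$, where $\beta$ is chosen in the open interval $(\alpha_\epsilon,\alpha+\varepsilon)$, so that $d\,c^\beta<d\,c^{\alpha_\epsilon}=1$. For the inductive step, assuming the bound for all arguments smaller than $n$ and using $\lfloor cn\rfloor\le cn<n$ together with monotonicity of $t\mapsto t^\beta$, inequality \eqref{eq:phi} gives
$$
\wt F(n)\le d\,K(cn)^\beta+g(n)=\bigl(d\,c^\beta\bigr)K n^\beta+g(n).
$$
This is at most $K n^\beta$ precisely when $g(n)\le\bigl(1-d\,c^\beta\bigr)K n^\beta$. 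Because $1-d\,c^\beta>0$ is a fixed positive constant while $g(n)/n^\beta\to0$, this inequality holds for all $n\ge n_0$ once $K$ is taken large enough; enlarging $K$ further so that it also dominates $\wt F(n)/n^\beta$ over the finite base range $n<n_0$ closes the induction. We conclude $\wt F(n)\le K n^\beta\le K n^{\alpha+\varepsilon}$ for all $n$, which is exactly $\wt F(n)\prq n^{\alpha+\varepsilon}$ (with scaling constant $a=1$).

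The work here is bookkeeping rather than any genuine obstacle. The one point requiring care is to keep the internal slack $\epsilon$ of the recursion separate from the target slack $\varepsilon$ and to verify that they can be matched: shrinking $\epsilon$ drives $\alpha_\epsilon$ down to $\alpha$, which is what guarantees the existence of an admissible exponent $\beta<\alpha+\varepsilon$ with $d\,c^\beta<1$. One must also respect that \eqref{eq:phi} is only asserted for sufficiently large $n$, which is why the induction is split into a base range (handled by choosing $K$ large, using finiteness of $\wt F$) and the inductive range; the floor $\lfloor cn\rfloor$ creates no difficulty, since it only decreases the argument. Finally, the subpolynomial term $g(n)=d\log d+\log(2n)$ is absorbed by the strict gap $1-d\,c^\beta>0$, and this absorption is exactly what prevents a surviving logarithmic factor in the final power bound while simultaneously being the reason the exponent cannot be pushed all the way down to $\alpha$ itself.
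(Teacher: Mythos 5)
Your proof is correct and follows exactly the route the paper intends: the paper derives Proposition~\ref{pr:entr} directly from the recursion~\eqref{eq:phi} (with only the heuristic remark that multiplying the argument by $\frac{d^2}{d-1}$ multiplies $\wt F$ by at most $d$), and your inductive ansatz $\wt F(n)\le Kn^\beta$ with $d\,c^\beta<1$ is the standard formalization of precisely that iteration, including the correct handling of the two slacks $\epsilon$ and $\varepsilon$, the logarithmic forcing term, and the finite base range.
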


Recall that the \emph{return profile} $\rho_\mu(n)=\mu_{2n}(e)$ of a symmetric
probability measure $\mu$ on a countable group $G$ is defined as the sequence
of return probabilities to the identity at even times. If the group is
finitely generated then the return profiles of any two symmetric finitely
supported non-degenerate measures $\mu_1,\mu_2$ are equivalent in the sense of
the above definition \cite{Pittet-Saloff99a}. Therefore, one can talk about
(the equivalence class of) the return profile $\rho_G$ of a finitely generated
group $G$ irrespectively of a concrete random walk on this group.

The \emph{isoperimetric profile} of a graph $\Gamma$ is defined as
$$
I_\Gamma (n) = \min \{|V| : |\partial V|/|V| \le 1/n\} \;,
$$
where $\partial V\subset V$ denotes the boundary of a finite vertex subset
$V\subset\Gamma$. In the same way as with the return profiles (actually, it is
much easier to see in this case), the isoperimetric profiles of the Cayley
graphs of a given finitely generated group $G$ corresponding to different
choices of generating sets are all pairwise equivalent, so that one can talk
about (the equivalence class of) the isoperimetric profile $I_G$ of a finitely
generated group $G$.

\begin{theorem} \label{th:profile}
The return and the isoperimetric profiles, respectively, of the Mother group
$\mthr=\mthr(X)$ with $|X|=d$ satisfy, for any $\varepsilon>0$, the relations
$$
\rho_\mthr(n) \succcurlyeq \exp\left(-n^{\alpha+\varepsilon}\right) \qquad
\text{and} \qquad I_\mthr(n) \prq
\exp\left(n^{\frac{2\alpha}{1-\alpha}+\varepsilon}\right) \;,
$$
where $\alpha$ is given by formula \eqref{eq:al}.
\end{theorem}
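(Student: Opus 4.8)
\noindent The plan is to read both profiles off the single entropy estimate $\wt F(n)\prq n^{\alpha+\varepsilon}$ of \propref{pr:entr}: the return profile will follow from it by an elementary comparison of entropies, and the isoperimetric profile will then follow by feeding the resulting return estimate into the standard correspondence between return probabilities and isoperimetric (F\o lner) profiles.

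For the return profile, observe that $\wt\mu$, being a convex combination of the uniform measures on the subgroups $A$ and $B$, is symmetric, and hence so is every convolution power $\wt\mu^n$. Consequently
\[
\rho_\mthr(n)=\wt\mu^{2n}(e)=\sum_{g\in\mthr}\wt\mu^n(g)\,\wt\mu^n(g^{-1})=\sum_{g\in\mthr}\wt\mu^n(g)^2 \;.
\]
Applying Jensen's inequality to the convex function $-\log$ and to the random variable $g\mapsto\wt\mu^n(g)$ distributed according to $\wt\mu^n$ gives
\[
-\log\rho_\mthr(n)=-\log\sum_{g}\wt\mu^n(g)^2\le\sum_{g}\wt\mu^n(g)\bigl(-\log\wt\mu^n(g)\bigr)=H(\wt\mu^n)=\wt F(n) \;;
\]
the left-hand side is just the order-two R\'enyi entropy of $\wt\mu^n$, which never exceeds its Shannon entropy. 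Hence \propref{pr:entr} yields $-\log\rho_\mthr(n)\le Cn^{\alpha+\varepsilon}$ for some constant $C$, and since $\varepsilon>0$ is arbitrary the constant is absorbed by an arbitrarily small increase of the exponent, giving $\rho_\mthr(n)\succcurlyeq\exp(-n^{\alpha+\varepsilon})$.

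For the isoperimetric profile I would invoke the standard dictionary relating the return probability and the isoperimetric profile of a finitely generated amenable group, as developed by Coulhon, Grigor'yan, Pittet and Saloff-Coste; here $\mthr=\langle A,B\rangle$ is finitely generated and $\wt\mu$ is symmetric, finitely supported and non-degenerate, so both profiles are well defined up to the equivalence $\sim$. In the stretched-exponential regime this dictionary links a return estimate $\rho_\mthr(n)\approx\exp(-n^\beta)$ and an isoperimetric estimate $I_\mthr(n)\approx\exp(n^a)$ through the monotone relation $\beta=a/(a+2)$, that is $a=2\beta/(1-\beta)$. The return bound of the previous paragraph gives $\beta\le\alpha+\varepsilon$, so by monotonicity the F\o lner exponent satisfies $a\le 2(\alpha+\varepsilon)/\bigl(1-(\alpha+\varepsilon)\bigr)$; since $\alpha<1$ by \eqref{eq:al} this is finite and tends to $\tfrac{2\alpha}{1-\alpha}$ as $\varepsilon\to0$, whence $I_\mthr(n)\prq\exp\!\bigl(n^{\frac{2\alpha}{1-\alpha}+\varepsilon}\bigr)$. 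The factor $2$ and the denominator $1-\alpha$ are structural rather than incidental: they encode the Cheeger-type passage from the $L^2$-quantity $\|\wt\mu^n\|_2^2$ governing $\rho_\mthr$ to the $L^1$-quantity measuring the size of F\o lner sets, in which the relevant isoperimetric profile is bounded below by the square of the Faber--Krahn (spectral) profile.

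The genuine content, and the step I expect to be the main obstacle, is this return-to-isoperimetry correspondence; the return estimate itself is elementary once \propref{pr:entr} is in hand. I would cite the correspondence rather than reprove it, while taking care of two points: that it is applied in the correct direction, so that a \emph{lower} bound on $\rho_\mthr$ produces an \emph{upper} bound on $I_\mthr$; and that the additive loss $\varepsilon$ is transported cleanly through the nonlinear map $\beta\mapsto 2\beta/(1-\beta)$, which is exactly what lets every multiplicative constant be swallowed into an arbitrarily small final increase of the exponent.
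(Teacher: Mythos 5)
Your proof is correct and takes essentially the same route as the paper's: the paper likewise gets the return-profile bound by combining \propref{pr:entr} with the well-known inequality $\wt\mu^{2n}(e)\ge\exp(-2H(\wt\mu^n))$ (which you reprove via Jensen, with the sharper constant $1$ in place of $2$), and then gets the isoperimetric bound by citing the Coulhon--Grigor'yan Nash-inequality machinery rather than reproving it. The only differences are cosmetic: you spell out the symmetry and non-degeneracy of $\wt\mu$ and the exponent arithmetic $a=2\beta/(1-\beta)$, which the paper leaves implicit in its citations.
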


\begin{proof}
\propref{pr:entr} in combination with the well-known inequality
$\wt\mu_{2n}(e) \ge \exp(-2H(\wt\mu^n))$ immediately implies the lower
estimate for the return profile. By the general Nash inequality machinery (see
\cite{Grigoryan94,Coulhon96} or a later exposition in \cite[Corollary
14.5(b)]{Woess00}) it leads to the upper estimate for the isoperimetric
profile.
\end{proof}

\begin{rem}
We emphasize that our argument provides an upper estimate for the
isoperimetric profile of the Mother groups without producing explicit F\o lner
sets. Finding them should apparently precede any work on establishing the
precise isoperimetric profiles for these groups.
\end{rem}

\begin{rem}
It is interesting to compare the estimates from \thmref{th:profile} with the
precise return and isoperimetric profiles of the \emph{lamplighter groups}
$\LL_k = \Z/2\wr\Z^k = (\Z/2)^{\Z^k}\rtimes\Z^k$,
$$
\rho_{\LL_k}(n) \sim \exp\left(-n^{\alpha_k}\right) \qquad \text{and} \qquad
I_{\LL_k}(n) \sim \exp\left(n^{\frac{2\alpha_k}{1-\alpha_k}}\right) \;,
$$
where $\alpha_k=k/(k+2)$, which were found in \cite{Pittet-Saloff99a} (also
see \cite{Erschler06}) and \cite{Erschler03}, respectively.
\end{rem}

We shall now combine \thmref{th:profile} with \thmref{thm:reduc} in order to
obtain similar estimates for an arbitrary finitely generated subgroup $G$ of
$\bded(X)$. Let us first notice that the return profile
$\rho_{\mthr(X)^d}=\rho^d_{\mthr(X)}$ of the $d$-th power of the Mother group
also satisfies the inequality from \thmref{th:profile}. Since the return
profile does not change when passing to a finite extension (see
\cite{Pittet-Saloff99a}), the return profile of the wreath product
$\mthr(X)\wr\sym{X}$ satisfies this inequality as well. Further, by the
monotonicity of the return profile under passing to subgroups
\cite{Pittet-Saloff00}, the same inequality from \thmref{th:profile} is also
satisfied for the return profile of an arbitrary finitely generated subgroup
of $\mthr(X)\wr\sym{X}$. The corresponding inequality for the isoperimetric
profile follows from the inequality for the return profile in the same way as
in the proof of \thmref{th:profile}. \thmref{thm:reduc} then implies

\begin{cor}
Let $G$ be a finitely generated subgroup of $\bded(X)$, and let $N=N(G)$ be as
in \thmref{thm:reduc}. Then the return and the isoperimetric profiles,
respectively, of the group $G$ satisfy, for any $\varepsilon>0$, the relations
$$
\rho_G(n) \succcurlyeq \exp\left(-n^{\alpha+\varepsilon}\right) \qquad
\text{and} \qquad I_G(n) \prq
\exp\left(n^{\frac{2\alpha}{1-\alpha}+\varepsilon}\right) \;,
$$
where
$$
\alpha=\frac{\log d^N}{\log \textstyle\frac{d^{2N}}{d^N-1}} < 1 \;.
$$
\end{cor}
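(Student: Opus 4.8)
The plan is to transport the estimates of \thmref{th:profile} along the embedding furnished by \thmref{thm:reduc}, invoking only the standard stability properties of the return profile. Since \thmref{thm:reduc} embeds $G$ as a subgroup of $\mthr(X^N)\wr\sym{X^N}$, and the latter is built from the Mother group over the enlarged alphabet $X^N$ of cardinality $d^N$, I would first apply \thmref{th:profile} \emph{to this enlarged alphabet}. This immediately gives $\rho_{\mthr(X^N)}(n)\succcurlyeq\exp(-n^{\alpha+\varepsilon})$ with $\alpha=\log d^N/\log\frac{d^{2N}}{d^N-1}$, which is exactly the exponent appearing in the statement.

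Next I would climb from $\mthr(X^N)$ to the wreath product in three moves, each a known invariance or monotonicity of the return profile. First, the direct power $\mthr(X^N)^{d^N}$ has return profile $\rho_{\mthr(X^N)}^{d^N}$; raising the lower bound to the power $d^N$ only multiplies the exponent inside $\exp$ by the constant $d^N$, and since $\exp(-cn^\beta)\sim\exp(-n^\beta)$ for every constant $c>0$ (absorb $c$ by rescaling the argument), this power satisfies the same bound. Second, $\mthr(X^N)\wr\sym{X^N}=\mthr(X^N)^{d^N}\rtimes\sym{X^N}$ is a \emph{finite} extension of this direct power, and the return profile is unchanged under finite extensions \cite{Pittet-Saloff99a}. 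Third, $G$ sits inside this wreath product as a subgroup, and the return profile is monotone under passage to subgroups \cite{Pittet-Saloff00}, so $\rho_G\succcurlyeq\rho_{\mthr(X^N)\wr\sym{X^N}}\succcurlyeq\exp(-n^{\alpha+\varepsilon})$. This settles the return-profile half of the corollary.

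For the isoperimetric half I would not track $I$ separately through the three moves above; instead I would feed the final lower bound on $\rho_G$ into the Nash-inequality correspondence of Grigor'yan and Coulhon (\cite{Grigoryan94,Coulhon96}, or \cite[Corollary~14.5(b)]{Woess00}), exactly as in the proof of \thmref{th:profile}. This converts $\rho_G(n)\succcurlyeq\exp(-n^{\alpha+\varepsilon})$ into $I_G(n)\prq\exp(n^{2\alpha/(1-\alpha)+\varepsilon})$. Because the exponent transformation $\beta\mapsto 2\beta/(1-\beta)$ is continuous and increasing, the arbitrary $\varepsilon$ in the return estimate yields an arbitrary $\varepsilon$ in the isoperimetric estimate, as claimed; the embedding of \thmref{thm:reduc} then identifies $G$ with the subgroup to which these bounds apply.

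The argument is essentially bookkeeping once \thmref{th:profile} and \thmref{thm:reduc} are in hand; the only point requiring care is the stability chain of the second paragraph. One must confirm that passing to a direct power and then to a finite extension preserves the \emph{precise} exponent $\alpha$, not merely the qualitative subexponential decay — that is, that the factor $d^N$ from the power, the index of the finite extension, and the generating-set dependence of the profile are all absorbed by the equivalences $\sim$ and $\prq$. The main thing to get right is the direction of the monotonicity \cite{Pittet-Saloff00}: a subgroup has return profile $\succcurlyeq$ that of the ambient group, so that it is precisely a \emph{lower} bound on $\rho_{\mthr(X^N)\wr\sym{X^N}}$ that descends to $\rho_G$.
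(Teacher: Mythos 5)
Your proposal is correct and follows essentially the same route as the paper's own proof: lower bound for the direct power $\rho_{\mthr^{d^N}}=\rho_\mthr^{d^N}$, invariance of the return profile under the finite extension to the wreath product \cite{Pittet-Saloff99a}, monotonicity under passage to the subgroup $G$ \cite{Pittet-Saloff00}, and then the Nash-inequality machinery to convert the return bound into the isoperimetric bound. The only cosmetic difference is that you run the whole chain directly over the enlarged alphabet $X^N$, whereas the paper states the chain for a general alphabet $X$ and invokes \thmref{thm:reduc} at the end to substitute $X^N$.
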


\appendix
\section{Entropy inequalities} \label{sec:app}

The \emph{entropy} of a discrete probability distribution $p=(p_i)$ is defined
as
$$
H(p) = - \sum p_i \log p_i \;,
$$
and it satisfies the inequality
$$
H(p) \le \log |\supp p\,|
$$
if $p$ has finite support.

Although all the properties of the entropy which we need
(Scholium~\ref{sch:A0}, \lemref{lem:A1} and
\lemref{lem:A2}) could in principle be deduced just from the definition above,
it is more convenient to adopt a more general point of view and argue in terms
of the \emph{entropy of measurable partitions}. See \cite{Rohlin67} for all
the background notions and definitions.

Let $(X,m)$ be a probability measure space, and $\xi=\{\xi_i\}$ be its
countable \emph{measurable partition}, so that $X=\bigcup_i \xi_i$ is a
disjoint union of the measurable \emph{elements} $\xi_i$ of the partition
$\xi$. We shall denote by $\xi(x)$ the element of $\xi$ which contains a point
$x\in X$, and put
$$
m(x;\xi) = m(\xi(x)) \;.
$$
Then the \emph{entropy} of the partition $\xi$ is defined as the
entropy of the distribution of measures of its elements, i.e.,
$$
H(\xi) = - \sum_{C\in\xi} m (C) \log m(C) = - \int_X \log
m(x;\xi)\,dm(x) \;.
$$
The entropy of partitions is monotone in the sense that if $\xi'$ is another
partition finer than $\xi$, i.e., its elements are smaller:
$$
\xi'(x)\subset\xi(x) \text{\ for all\ }x\in X,
$$
then
\begin{equation} \label{eq:Hmonot}
H(\xi') \ge H(\xi) \;.
\end{equation}

Given a measurable subset $C\subset X$ denote by $m_C$ the corresponding
\emph{conditional measure}, i.e., the normalized restriction of the measure
$m$ to $C$, and let $\xi_C$ denote the \emph{trace} of the partition $\xi$ on
the space $(C,m_C)$, i.e., $\xi_C(x) = \xi(x) \cap C$ for any $x\in C$.

If $\zeta$ is another countable partition, set
$$
m(x;\xi|\zeta) = m_{\zeta(x)} (x; \xi_{\zeta(x)}) = m( \xi(x) \cap
\zeta(x) ) / m(\zeta(x)) \;.
$$
Then the (mean) conditional entropy of $\xi$ with respect to
$\zeta$ is defined as the weighted average of the entropies of the
traces of $\xi$ on the elements of $\zeta$:
$$
H(\xi|\zeta) = \sum_{C\in\zeta} m(C) H(\xi_C) = - \int_X \log
m(x;\xi|\zeta)\,dm(x) \;.
$$
The conditional entropy has the property that
$$
H(\xi|\zeta) \le H(\xi) \;,
$$
and it satisfies the identity
$$
H(\xi|\zeta) + H(\zeta) = H(\xi \vee \zeta) \;,
$$
where $\xi\vee\zeta$ is the \emph{join} of the partitions $\xi$
and $\zeta$, i.e.,
$$
(\xi\vee\zeta)(x) = \xi(x) \cap \zeta(x) \qquad \text{for all\
}x\in X,
$$
so that in view of \eqref{eq:Hmonot}
\begin{equation} \label{eq:Hineq}
H(\xi|\zeta) \le H(\xi) \le H(\xi|\zeta) + H(\zeta) = H(\xi \vee \zeta) \le
H(\xi) + H(\zeta) \;.
\end{equation}

We reformulate the right-hand side inequality of~\eqref{eq:Hineq} as a
\begin{sch}\label{sch:A0}
  If $m$ is a probability measure on a countable set $X$, and
  $\pi_i:X\to X_i$ is a family of projections of $X$ which separates
  its points, then the entropies of $m$ and the image measures
  $m_i=\pi_i(m)$ satisfy the inequality
  \[H(m)\le\sum_i H(m_i).\]
\end{sch}

We shall now use inequalities \eqref{eq:Hineq} to obtain the following
properties.

\begin{lem}\label{lem:A1}
If $\{m_i\}_{i\in I}$ is a countable family of probability measures on a
countable set $X$, then for any probability distribution $p=(p_i)$ on the
index set $I$ the entropy of the convex combination $\sum p_i m_i$ satisfies
the inequalities
\begin{equation} \label{eq:entrineq}
\sum_i p_i H(m_i) \le H\left(\sum_i p_i m_i \right) \le \sum_i p_i
H(m_i) + H(p) \;.
\end{equation}
\end{lem}

\begin{proof}
Let us consider the space $I\times X$ with the probability measure
$$
m(i,x) = p(i) m_i(x) \;,
$$
and endow it with the partitions $\xi^I,\xi^X$ with elements $\{i\}\times X$
and $I\times \{x\}$, respectively. Then
$$
H(\xi^X) = H\left(\sum_i p_i m_i \right) \;, \qquad H(\xi^X|\xi^I)
= \sum_i p_i H(m_i) \;, \qquad H(\xi^I) = H(p) \;,
$$
and the claim follows from inequalities \eqref{eq:Hineq}.
\end{proof}

\begin{lem} \label{lem:A2}
For any two probability measures $\mu_1,\mu_2$ on a countable group $G$ the
entropy of their convolution $\mu_1\mu_2$ satisfies the inequalities
\begin{equation} \label{eq:conv}
H(\mu_1), H(\mu_2) \le H(\mu_1\mu_2) \le H(\mu_1) + H(\mu_2) \;.
\end{equation}
\end{lem}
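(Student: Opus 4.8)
The plan is to mimic the proof of \lemref{lem:A1}: I realise the convolution $\mu_1\mu_2$ as the image of a product measure under group multiplication and read off both the lower and the upper bounds directly from the chain \eqref{eq:Hineq}. Concretely, I would work on the space $G\times G$ equipped with the product measure $m=\mu_1\otimes\mu_2$, i.e.\ $m(g_1,g_2)=\mu_1(g_1)\mu_2(g_2)$, and introduce three countable partitions: $\xi^1$ with elements $\{g_1\}\times G$, $\xi^2$ with elements $G\times\{g_2\}$, and $\eta$ whose elements are the level sets of the multiplication map $(g_1,g_2)\mapsto g_1g_2$. By the definition of the entropy of a partition one has $H(\xi^1)=H(\mu_1)$ and $H(\xi^2)=H(\mu_2)$, while $H(\eta)=H(\mu_1\mu_2)$, since the $m$-measure of the element of $\eta$ through $(g_1,g_2)$ is exactly $(\mu_1\mu_2)(g_1g_2)$. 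Because $m$ is a product measure, $\xi^1$ and $\xi^2$ are independent, so that $H(\xi^1|\xi^2)=H(\xi^1)$.

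For the upper bound I would observe that the join $\xi^1\vee\xi^2$ is the partition of $G\times G$ into single points and is therefore finer than $\eta$. By the monotonicity \eqref{eq:Hmonot} together with the right-hand inequality of \eqref{eq:Hineq} this yields $H(\mu_1\mu_2)=H(\eta)\le H(\xi^1\vee\xi^2)\le H(\xi^1)+H(\xi^2)=H(\mu_1)+H(\mu_2)$, which is the second assertion of \eqref{eq:conv}.

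For the two lower bounds the key observation is the invertibility of group multiplication: on each fibre $C=G\times\{g_2\}$ the map $g_1\mapsto g_1g_2$ is a bijection, so the trace of $\eta$ on $C$ coincides with the trace of $\xi^1$ (both are the partition of $C$ into points). Averaging these traces over the elements of $\xi^2$ gives $H(\eta|\xi^2)=H(\xi^1|\xi^2)$, and independence turns the right-hand side into $H(\xi^1)=H(\mu_1)$. Since the left-hand inequality of \eqref{eq:Hineq} gives $H(\eta|\xi^2)\le H(\eta)$, I obtain $H(\mu_1)\le H(\mu_1\mu_2)$; the companion bound $H(\mu_2)\le H(\mu_1\mu_2)$ follows identically upon fixing instead the first coordinate and using that $g_2\mapsto g_1g_2$ is a bijection of each fibre $\{g_1\}\times G$.

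I expect the only delicate point to be the bookkeeping when some of the entropies are infinite, where one must avoid ``cancelling'' a possibly infinite $H(\xi^2)$ out of the identity $H(\eta|\xi^2)+H(\xi^2)=H(\eta\vee\xi^2)$. Routing the lower bound through the conditional entropy $H(\eta|\xi^2)\le H(\eta)$ rather than through subtraction sidesteps this issue entirely, so that no genuine obstacle remains and the whole statement reduces to the partition inequalities already recorded in \eqref{eq:Hmonot} and \eqref{eq:Hineq}.
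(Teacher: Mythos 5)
Your proof is correct, but it takes a different route from the paper's. The paper's proof is a two-line reduction to \lemref{lem:A1}: it writes the convolution as a convex combination of translates, $\mu_1\mu_2=\sum_g\mu_2(g)\,\mu_1g$, and applies \eqref{eq:entrineq} with $p_g=\mu_2(g)$, $m_g=\mu_1g$, using that each translate $\mu_1g$ has the same entropy as $\mu_1$; the remaining bound $H(\mu_2)\le H(\mu_1\mu_2)$ comes from the mirror decomposition into left translates of $\mu_2$. You instead bypass \lemref{lem:A1} entirely and run the partition machinery from scratch on the coupling space $(G\times G,\mu_1\otimes\mu_2)$, with the multiplication-fiber partition $\eta$ playing the role of the convolution. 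The two arguments are in fact isomorphic: the change of variables $(g_1,g_2)\mapsto(g_2,g_1g_2)$ carries your product measure to the measure $m(i,x)=\mu_2(i)\mu_1(xi^{-1})$ that appears when \lemref{lem:A1} is unrolled for the paper's choice of data, and your observation that multiplication is a bijection on each fibre is exactly the paper's observation that translation preserves entropy. What your version buys is self-containedness and symmetry: both lower bounds in \eqref{eq:conv} fall out of one and the same coupling by swapping the roles of the coordinates, whereas the paper needs a second, mirror-image decomposition. What the paper's version buys is brevity and reuse of an already-proven lemma. Your final concern about infinite entropies is legitimate but applies equally to both routes, and both handle it the same way: all bounds are drawn from the subtraction-free chain \eqref{eq:Hineq} (in the paper, packaged as \eqref{eq:entrineq}), so no cancellation of a possibly infinite term ever occurs.
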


\begin{proof}
By the definition of the convolution, the measure $\mu_1\mu_2$ is the sum of
the translates
$$
\mu_1\mu_2 = \sum_g \mu_2(g)\,  \mu_1 g  \;,
$$
and the inequalities $H(\mu_1) \le H(\mu_1\mu_2) \le H(\mu_1) + H(\mu_2)$
follow from putting $p_g=\mu_2(g)$ and $m_g=\mu_1 g$ in Lemma~\ref{lem:A1}. In
the same way one shows that $H(\mu_2)\le H(\mu_1\mu_2)$.
\end{proof}

\bibliographystyle{amsalpha}
\bibliography{C:/Sorted/MyTEX/mine}

\end{document}